\newtheorem{theorem}{Theorem}[section]
\newtheorem{lemma}[theorem]{Lemma}
\newtheorem{corollary}[theorem]{Corollary}
\newtheorem{proposition}[theorem]{Proposition}
\theoremstyle{definition}
\newtheorem{definition}[theorem]{Definition}
\theoremstyle{remark}
\newtheorem{remark}[theorem]{Remark}
\numberwithin{equation}{section}
\begin{document}
\def\Xint#1{\mathchoice
  {\XXint\displaystyle\textstyle{#1}}%
  {\XXint\textstyle\scriptstyle{#1}}%
  {\XXint\scriptstyle\scriptscriptstyle{#1}}%
  {\XXint\scriptscriptstyle\scriptscriptstyle{#1}}%
  \!\int}
\def\XXint#1#2#3{{\setbox0=\hbox{$#1{#2#3}{\int}$}
    \vcenter{\hbox{$#2#3$}}\kern-.5\wd0}}
\def\ddashint{\Xint=}
\def\avgint{\Xint-}

\title[Quantitative two weight theorem]{  A new quantitative two weight theorem for the Hardy-Littlewood maximal operator} 

\author{Carlos P\'erez}
\address{Departamento de An\'alisis Matem\'atico,
Facultad de Matem\'aticas, Universidad de Sevilla, 41080 Sevilla,
Spain} 
\email{carlosperez@us.es}

\author{Ezequiel Rela}
\address{Departamento de An\'alisis Matem\'atico,
Facultad de Matem\'aticas, Universidad de Sevilla, 41080 Sevilla,
Spain} 
\email{erela@us.es}

\thanks{Both authors are supported by the Spanish Ministry of Science and Innovation grant MTM2012-30748 and by the Junta de Andaluc\'ia, grant FQM-4745.}

\subjclass{Primary: 42B25. Secondary: 43A85.}

\keywords{Two weight theorem, Space of homogeneous type,   Muckenhoupt  weights, Calder\'on-Zygmund, Maximal functions}

\begin{abstract} 
A quantitative two weight theorem for the Hardy-Little\-wood maximal operator is proved improving the known ones. As a consequence a new proof of the main results in \cite{HP} and \cite{HPR1} is obtained which avoids the use of the sharp quantitative reverse Holder inequality  for $A_{\infty}$ proved in those papers. Our results are valid within the context of spaces of homogeneous type without imposing the non-empty annuli condition.
\end{abstract}

\maketitle

\section{ Introduction and Main results} \label{sec:intro}

\subsection{Introduction}
The purpose of this note is to present a \emph{quantitative} two weight theorem for the Hardy-Littlewood maximal operator  when the underlying space is a space of homogeneous type $\mathcal{S}$ (SHT in the sequel), endowed with a quasimetric $\rho$ and a doubling measure $\mu$ (see Section \ref{sec:SHT} for the precise definitions). 
We briefly recall some background on this problem in the \emph{euclidean} or \emph{classical} setting,  when we are working in $\mathbb{R}^n$ and we consider Lebesgue measure and euclidean metric. We also assume that in this classical setting all the  maximal operators involved and $A_p$ classes of weights are defined over cubes. Let $M$ stand for the usual uncentered Hardy-Littlewood maximal operator:
\begin{equation*}
Mf(x) = \sup_{Q\ni x } \frac{1}{|Q|}\int_{Q} |f|\,dx.
\end{equation*}
The problem of characterizing the pair of weights for which the maximal operator is bounded  between weighted Lebesgue spaces was solved by Sawyer \cite{sawyer82b}: 
To be more precise, if $1<p<\infty$ we define for any pair of weights $w,\sigma$, the (two weight) norm,
\begin{equation}  \label{MainQuestion}
\|M(\cdot \sigma)\|_{L^p(w)}:= \sup_{f\in L^p(\sigma)}  \frac{ \|M(f \sigma) \|_{L^p(w)}}{ \|f\|_{L^p(\sigma)} }
\end{equation} 
then Sawyer showed that  $\|M(\cdot \sigma)\|_{L^p(w)}$ is finite if and and only if
\begin{equation*}
 \sup_Q \frac{\int_Q (M(\chi_Q \sigma )^p \ wdx}{ \sigma(Q)}<\infty,
\end{equation*} 
where the supremum is taken over all the cubes in $\mathbb{R}^n$. 
A quantitative precise version of this result is the following: if we define 
\begin{equation*}
 [w,\sigma]_{S_p}:= \left(\frac1{\sigma(Q)} \,\int_Q M(\sigma\chi_Q)^pw\ dx\right)^{1/p}.
\end{equation*}
then
\begin{equation}\label{eq:moen}
 \|M(\cdot \sigma)\|_{L^p(w)} \sim p'[w,\sigma]_{S_p},
\end{equation} 
where $\frac{1}{p}+\frac{1}{p'}=1$. This result is due to K. Moen and can be found in \cite{Moen:Two-weight}. 

However, it is still an open problem to find a characterization more closely related to the $A_p$ condition of Muckenhoupt which is easier to use in applications.  Indeed, recall that the two weight $A_p$ condition:
\begin{equation*}
\sup_Q\left(\avgint_Q w\ dx\right)\left(\avgint_Q v^{-\frac1{p-1}}\ dx\right)^{p-1}<\infty
\end{equation*} 
is necessary for the boundedness of $M$ from $L^p(v)$ into $L^{p}(w)$ (which is clearly equivalent, setting $\sigma=v^{1-p'}$, to the two weight problem), but it is not sufficient. Therefore, the general idea is to strengthen the $A_p$ condition to make it sufficient. The first result on this direction is due to Neugebauer \cite{Neugebauer}, proving that, for any $r>1$, it is sufficient to consider the following ``power bump'' for the $A_p$ condition:
\begin{equation}\label{Neug}
\sup_Q\left(\avgint_Q w^{r}\ dx\right)^\frac{1}{r}\left(\avgint_Q v^{-\frac{r}{p-1}}\ dx\right)^{(p-1)r}<\infty
\end{equation} 

Later, the first author improved this result in \cite{perez95} by considering a different approach which allows to consider much larger classes weights. The new idea is to replace \emph{only} the average norm associated
to the weight $v^{-\frac1{p-1}}$  in \eqref{Neug} by an ``stronger'' norm which is often called a ``bump". This norm is defined
in terms of an appropriate Banach function $X$ space satisfying certain special property. This property is related to the $L^p$ boundedness of a natural maximal function related to the space. More precisely, for a given Banach function space $X$, the local $X$-average of a measurable function $f$ associated to the cube $Q$ is defined as
\begin{equation*}
 \|f\|_{X,Q}=\left\|\tau_{\ell(Q)}(f\chi_Q)\right\|_X,
\end{equation*} 
where $\tau_\delta$ is the dilation operator $\tau_\delta f(x)=f(\delta x)$, $\delta>0$ and $\ell(Q)$ stands for the sidelength of the cube $Q$. The natural maximal operator associated to the space $X$ is defined as
\begin{equation*}
M_{X}f(x)= \sup_{Q:x\in Q} \|f\|_{X,Q}
\end{equation*}
and the key property is that the maximal operator $M_{X'}$ is bounded on  $L^p(\mathbb{R}^n)$  
where $X'$ is the associate space to $X$ (see \eqref{bnessX'} below).

As a corollary of our main result, Theorem \ref{thm:main},  we will give a quantitative version of the main result from \cite{perez95} regarding sufficient conditions for the two weight inequality to hold:

\begin{theorem}\label{thm:perez-bump}
Let $w$ and $\sigma$ be a pair of weights that satisfies the condition 
\begin{equation}\label{keySufficientCondition}
\sup \left(\avgint_Q w\ dx\right) \|\sigma^{1/p'}\|^p_{X,Q} <\infty.
\end{equation}
Suppose, in addition, that the maximal operator associated to the associate  space is bounded on $L^p(\mathbb{R}^n)$:
\begin{equation}\label{bnessX'}
 M_{X'}: L^p(\mathbb{R}^n)\to L^p(\mathbb{R}^n).
\end{equation}
Then there is a finite positive constant $C$ such that:
\begin{equation*}
\|M(\cdot \sigma)\|_{L^p(w)} \leq C. 
\end{equation*} 

\end{theorem}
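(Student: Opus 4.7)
The plan is to reduce the inequality to the Sawyer-type testing condition and then verify that condition using the bump hypothesis. By Moen's characterization \eqref{eq:moen}, one has $\|M(\cdot\sigma)\|_{L^p(w)}<\infty$ precisely when $[w,\sigma]_{S_p}<\infty$, so the task reduces to proving, uniformly in $Q$,
\[
\int_{Q} M(\sigma\chi_Q)^p\,w\,dx \;\leq\; C\,\sigma(Q),
\]
with $C$ depending only on the supremum in \eqref{keySufficientCondition} and on $\|M_{X'}\|_{L^p\to L^p}$.

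To bound this testing integral I would fix $Q$ and perform a Calder\'on--Zygmund stopping time argument on the level sets $\Omega_k=\{M(\sigma\chi_Q)>a^k\}$ (with $a>2$), extracting maximal subcubes $\{Q_{k,j}\}\subset Q$ that satisfy $\avgint_{Q_{k,j}}\sigma\chi_Q\sim a^k$, together with pairwise disjoint \emph{principal sets} $E_{k,j}=Q_{k,j}\setminus\Omega_{k+1}$ obeying $|E_{k,j}|\geq(1-1/a)|Q_{k,j}|$. Layer-cake slicing then yields
\[
\int_{Q} M(\sigma\chi_Q)^p\,w\,dx \;\lesssim\; \sum_{k,j}\Bigl(\avgint_{Q_{k,j}}\sigma\Bigr)^{\!p} w(Q_{k,j}).
\]

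To bring in the bump I would split $\sigma=\sigma^{1/p'}\cdot\sigma^{1/p}$ and apply generalized H\"older in $X$--$X'$ duality on each cube,
\[
\avgint_{Q_{k,j}}\sigma \;\leq\; 2\,\|\sigma^{1/p'}\|_{X,Q_{k,j}}\,\|\sigma^{1/p}\|_{X',Q_{k,j}}.
\]
Raising to the $p$-th power and writing $w(Q_{k,j})=|Q_{k,j}|\bigl(\avgint_{Q_{k,j}}w\bigr)$, the factor $\bigl(\avgint_{Q_{k,j}}w\bigr)\|\sigma^{1/p'}\|_{X,Q_{k,j}}^p$ is absorbed directly by the bump hypothesis \eqref{keySufficientCondition}, and each summand is reduced to a constant multiple of $\|\sigma^{1/p}\|_{X',Q_{k,j}}^p\,|Q_{k,j}|$.

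The endgame relies on the defining property of $M_{X'}$: for every $x\in Q_{k,j}$,
\[
M_{X'}(\sigma^{1/p}\chi_Q)(x) \;\geq\; \|\sigma^{1/p}\|_{X',Q_{k,j}}.
\]
Combining this pointwise bound with sparseness, disjointness of the $E_{k,j}$, and finally invoking \eqref{bnessX'},
\[
\sum_{k,j}\|\sigma^{1/p}\|_{X',Q_{k,j}}^{p}|Q_{k,j}| \;\lesssim\; \sum_{k,j}\int_{E_{k,j}} M_{X'}(\sigma^{1/p}\chi_Q)^{p}\,dx \;\leq\; \|M_{X'}(\sigma^{1/p}\chi_Q)\|_{L^p}^{p} \;\lesssim\; \sigma(Q),
\]
which gives the required testing bound. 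The main delicate step is not the algebra but the stopping time bookkeeping that produces the sparse family with the good lower bound on $|E_{k,j}|$; the rest is a direct application of generalized H\"older together with the boundedness hypothesis \eqref{bnessX'}.
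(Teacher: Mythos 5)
Your proposal is correct and follows essentially the same route as the paper's: reduce to Sawyer's testing condition via the Moen/Kairema two-weight theorem, decompose the testing integral by a Calder\'on--Zygmund stopping time on the level sets $\{M(\sigma\chi_Q)>a^k\}$, apply generalized H\"older on the stopping cubes to extract the bump constant, disjointize via the sets $E_{k,j}$ (the paper's Lemma~\ref{lem:disjointing}), and finish with the $L^p$ boundedness of $M_{X'}$. The paper packages this inside the more general quantitative Theorem~\ref{thm:main} (introducing $[\sigma,\bar\Phi]_{W_p}$ as an intermediate quantity) and then specializes in Corollary~\ref{cor:precise-bump}, but the core argument is identical to yours.
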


In this note we give a different  result of this type with the hope that it may lead to different, possible better, conditions for the two weight problem for Singular Integral Operators.

Most of the interesting examples are obtained when $X$ is an Orlicz  space $L_\Phi $ defined in term of the Young function $\Phi$ (see Section \ref{sec:SHT} for the precise definitions). In this case,  the local average with respect to $\Phi$ over a cube $Q$ is
\begin{equation*}
 \|f\|_{\Phi,Q} =\|f\|_{\Phi,Q,\mu}= \inf\left\{\lambda >0:
 \frac{1}{\mu(Q)}\int_{Q} \Phi\left(\frac{ |f|}{ \lambda } \right) 
dx \le 1\right\}
\end{equation*}
where $\mu$ is here the Lebesgue measure. The  corresponding maximal function is 
\begin{equation}\label{eq:maximaltype}
M_{\Phi}f(x)= \sup_{Q:x\in Q} \|f\|_{\Phi,Q}.
\end{equation}

Related to condition \eqref{keySufficientCondition} we introduce here the following quantities.

\begin{definition}\label{def:Ap-multiple}
Let $(\mathcal{S},d\mu)$ be a SHT. Given a ball $B\subset \mathcal{S}$, a Young function $\Phi$ and two weights $w$ and $\sigma$, we define the quantity
\begin{equation}\label{eq:A_p-local}
 A_p(w,\sigma,B,\Phi):=\left( \avgint_{B} w\, d\mu\right)\|\sigma^{1/p'}\|^p_{\Phi,B}
\end{equation} 
and we say that a pair of weights belong to the $A_{p,\Phi}$ class if 
\begin{equation*}
[w,\sigma,\Phi]_{A_p}:=\sup_B A_p(w,\sigma,B,\Phi) <\infty,
\end{equation*}
where the $\sup$ is taken over all balls in the space. In the particular case of $\Phi(t)=t^{p'}$, this condition  corresponds to the classical $A_p$ condition and we use the notation
\begin{equation*}
 [w,\sigma]_{A_p}:=\sup_B\left(\avgint_{B} w\ d\mu\right)\left(\avgint_{B} \sigma\ d\mu\right)^{p-1}. 
\end{equation*}

\end{definition}

We define now a generalization of the Fuji-Wilson constant of a $A_{\infty}$ weight $\sigma$ as introduced  in \cite{HP}  by means of a Young function $\Phi$:
\begin{equation*}
[\sigma,\Phi]_{W_p}:=\sup_B\frac{1}{\sigma(B)}\int_B M_{\Phi}\left(\sigma^{1/p}\chi_B\right)^p\ d\mu 
\end{equation*} 

Note that the particular choice of $\Phi_p(t):=t^p$ reduces to the $A_\infty$ constant ( see Definition \eqref{eq:Ainfty} from Section \ref{sec:SHT}):
\begin{equation}\label{eq:WpPhi-p--Ainfty}
[\sigma,\Phi_p]_{W_p}=\sup_B\frac{1}{\sigma(B)}\int_B M\left(\sigma\chi_B\right)\ d\mu =[\sigma]_{A_\infty}.
\end{equation}

\subsection{Main results}
Our main purpose in the present note is to address the problem mentioned above within the context of spaces of homogeneous type. In this context, the Hardy--Littlewood maximal operator $M$ is defined over balls:
\begin{equation}\label{eq:maximal-SHT}
Mf(x) = \sup_{B\ni x } \frac{1}{\mu(B)}\int_{B} |f|\,d\mu.
\end{equation}

The Orlicz type maximal operators are defined also with balls and with respect to the measure $\mu$ in the natural way. 

Our main result is the following theorem.
\begin{theorem} \label{thm:main}
Let  $1 < p < \infty$ and let $\Phi$ be any Young function with conjugate function $\bar\Phi$. Then, for any pair of weights $w,\sigma$,
there exists a structural constant $C>0$ such that the (two weight) norm defined in \eqref{MainQuestion} satisfies
\begin{equation}\label{eq:main}
\|M(\cdot \sigma)\|_{L^p(w)} \leq C p'\left( [w,\sigma,\Phi]_{A_p}[\sigma,\bar\Phi]_{W_p}\right)^{1/p},
\end{equation}
\end{theorem}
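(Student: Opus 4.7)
The plan is to reduce the two-weight norm inequality to a Sawyer-type testing condition and then to estimate the testing constant by combining the $A_p$-bump hypothesis with the Wilson condition. Concretely, by the quantitative Moen-Sawyer equivalence \eqref{eq:moen} (in its SHT version), $\|M(\cdot\sigma)\|_{L^p(w)}\sim p'\,[w,\sigma]_{S_p}$, so it suffices to prove the pointwise-in-$B$ testing bound
\begin{equation*}
\int_B M(\sigma\chi_B)^p\,w\,d\mu \;\le\; C\,[w,\sigma,\Phi]_{A_p}\,[\sigma,\bar\Phi]_{W_p}\,\sigma(B)\qquad \text{for every ball } B\subset\mathcal{S},
\end{equation*}
with $C$ depending only on the doubling constant; the factor $p'$ will be contributed by the Moen equivalence.

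To estimate the left-hand side I would apply a Calder\'on-Zygmund decomposition localized to $B$: using a Christ-type dyadic system on the SHT $(\mathcal{S},\rho,\mu)$ (which is available without the non-empty annuli hypothesis), decompose the level sets $\Omega_k=\{M(\sigma\chi_B)>a^k\}\cap B$, for a threshold $a>1$ chosen in terms of the doubling constant, into pairwise disjoint stopping balls $\{B_k^j\}_j$ contained in $B$, with $a^k<\avgint_{B_k^j}\sigma\le C_\mu a^k$. Setting $E_k^j:=B_k^j\cap(\Omega_k\setminus\Omega_{k+1})$ produces pairwise disjoint subsets $E_k^j\subset B_k^j\subset B$ satisfying $\mu(E_k^j)\ge c\,\mu(B_k^j)$ for $a$ large enough. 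The standard layer-cake estimate then gives
\begin{equation*}
\int_B M(\sigma\chi_B)^p\,w\,d\mu \;\lesssim\; \sum_{k,j}\Bigl(\avgint_{B_k^j}\sigma\Bigr)^p\,w(E_k^j).
\end{equation*}

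Now I apply the generalized H\"older inequality in Orlicz spaces with the conjugate pair $(\Phi,\bar\Phi)$, splitting $\sigma=\sigma^{1/p'}\sigma^{1/p}$, to obtain $\avgint_{B_k^j}\sigma \le 2\,\|\sigma^{1/p'}\|_{\Phi,B_k^j}\|\sigma^{1/p}\|_{\bar\Phi,B_k^j}$. The first factor is absorbed directly by the $A_{p,\Phi}$ hypothesis: $\|\sigma^{1/p'}\|^p_{\Phi,B_k^j}\le [w,\sigma,\Phi]_{A_p}\,\mu(B_k^j)/w(B_k^j)$; then $w(E_k^j)\le w(B_k^j)$ reduces the sum to
\begin{equation*}
C\,[w,\sigma,\Phi]_{A_p}\sum_{k,j}\|\sigma^{1/p}\|^p_{\bar\Phi,B_k^j}\,\mu(B_k^j).
\end{equation*}
Because $B_k^j\subset B$, for every $x\in E_k^j$ one has $\|\sigma^{1/p}\|_{\bar\Phi,B_k^j}\le M_{\bar\Phi}(\sigma^{1/p}\chi_B)(x)$. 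Using $\mu(E_k^j)\ge c\,\mu(B_k^j)$ and the disjointness of $\{E_k^j\}\subset B$,
\begin{equation*}
\sum_{k,j}\|\sigma^{1/p}\|^p_{\bar\Phi,B_k^j}\,\mu(B_k^j) \;\lesssim\; \int_B M_{\bar\Phi}(\sigma^{1/p}\chi_B)^p\,d\mu \;\le\; [\sigma,\bar\Phi]_{W_p}\,\sigma(B),
\end{equation*}
the last step being simply the definition of the Wilson constant; chaining the estimates gives the testing bound.

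The main obstacles I foresee are technical rather than conceptual: (i) setting up the Calder\'on-Zygmund decomposition so that the stopping balls $B_k^j$ lie inside $B$ in a general SHT without the non-empty annuli assumption, which relies on a careful use of Christ-type dyadic cubes adapted to $B$; and (ii) justifying the SHT version of the Moen-Sawyer equivalence with the sharp $p'$ factor, or alternatively replicating Moen's argument directly inside the CZ computation (where the $p'$ would arise from a sharp Carleson embedding attached to the disjoint family $\{E_k^j\}$). The central new ingredient---the use of $[\sigma,\bar\Phi]_{W_p}$ in place of the sharp reverse H\"older inequality of \cite{HP,HPR1}---enters only at the final step as a purely local application of the definition, which is precisely what allows the approach to avoid the machinery of reverse H\"older for $A_\infty$.
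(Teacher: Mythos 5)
Your argument follows essentially the same route as the paper: reduce to the Sawyer testing constant via the Kairema/Moen two-weight theorem (which supplies the $p'$), decompose the level sets of $M(\sigma\chi_B)$ at a geometric scale $a>1$, split $\sigma=\sigma^{1/p'}\sigma^{1/p}$ by the local Orlicz H\"older inequality, absorb the $\sigma^{1/p'}$ factor into $[w,\sigma,\Phi]_{A_p}$ using $w(E)\le w(B)$, and absorb the $\sigma^{1/p}$ factor into $[\sigma,\bar\Phi]_{W_p}$ by pointwise domination on the disjoint exceptional sets. The only real divergence is cosmetic: you propose Christ-type dyadic cubes to get disjoint stopping sets covering each $\Omega_k$, whereas the paper uses a Vitali covering by balls (so $\Omega_k\subset\bigcup_i\theta B_i^k$ with disjoint $B_i^k$) together with a separate disjointification lemma; both routes yield $\mu(B_i^k)\lesssim\mu(E_i^k)$ for $a$ large. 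One small point you flag but do not resolve, and which the paper does handle explicitly: the Calder\'on--Zygmund stopping only runs for levels $a^k\gtrsim\avgint_B\sigma$, so the contribution from $\{x\in B: M(\sigma\chi_B)\le a\avgint_B\sigma\}$ must be estimated separately (a single H\"older step over $B$ itself). With that small piece filled in, your argument is the paper's proof.
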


We emphasize that \eqref{eq:main}, which is even new in the usual context of Euclidean Spaces, fits into the spirit of the $A_p-A_{\infty}$ theorem derived in \cite{HP} and \cite{HPR1}. The main point here is that we have a two weight result with a better condition and with a  proof that avoids completely the use of the sharp quantitative reverse H\"older inequality  for $A_{\infty}$ weights proved in these papers. This property is, of course, of independent interest but it is not used in our results. 

From this Theorem, we derive several corollaries. First, we have a direct proof of the two weight result derived in \cite{HP} using the $[w]_{A_{\infty}}$ constant of Fujii-Wilson \eqref{eq:Ainfty}. 

\begin{corollary}\label{cor:mixed-two-weight}
Under the same hypothesis of Theorem \ref{thm:main}, we have that there exists a structural constant $C>0$ such that
\begin{equation}\label{eq:mixed-two}
\|M(\cdot \sigma)\|_{L^p(w)} \leq Cp'\left([w,\sigma]_{A_p}[\sigma]_{A_\infty}\right)^{1/p}.
\end{equation} 
\end{corollary}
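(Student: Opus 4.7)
The plan is to deduce the corollary from Theorem \ref{thm:main} by making a judicious choice of the Young function $\Phi$. The natural candidate, given the identity \eqref{eq:WpPhi-p--Ainfty} which relates $[\sigma,\Phi_p]_{W_p}$ with $\Phi_p(t)=t^p$ to the Fujii--Wilson constant $[\sigma]_{A_\infty}$, is to force the conjugate $\bar\Phi$ to coincide (up to a constant) with the power $t^p$. This suggests choosing $\Phi(t)=t^{p'}$ (or more precisely, a constant multiple of it so that the Young normalization is met).

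With $\Phi(t)=t^{p'}$, the first step is to observe that the local Luxemburg average reduces to an ordinary power average: a direct computation from the definition of $\|\cdot\|_{\Phi,B}$ gives
\begin{equation*}
\|\sigma^{1/p'}\|_{\Phi,B}^{p}=\left(\avgint_{B}\sigma\,d\mu\right)^{p-1},
\end{equation*}
so that the generalized constant $[w,\sigma,\Phi]_{A_p}$ from Definition \ref{def:Ap-multiple} is exactly the classical two weight constant $[w,\sigma]_{A_p}$.

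Next, the conjugate Young function of $t^{p'}$ is, by a routine Legendre transform, a constant multiple of $t^p$, namely $\bar\Phi(t)=\frac{1}{p(p')^{p-1}}\,t^p$. Since the Luxemburg norm scales as $\|f\|_{cG,B}=c^{-1/p}\|f\|_{G,B}$ whenever $G(t)=t^p$, the Orlicz maximal operator $M_{\bar\Phi}$ is just a scalar multiple of the Hardy--Littlewood maximal operator $M=M_{\Phi_p}$. Consequently,
\begin{equation*}
[\sigma,\bar\Phi]_{W_p}=\frac{1}{p(p')^{p-1}}\,[\sigma,\Phi_p]_{W_p}=\frac{1}{p(p')^{p-1}}\,[\sigma]_{A_\infty},
\end{equation*}
using \eqref{eq:WpPhi-p--Ainfty} in the last equality.

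Plugging these two identifications into the conclusion \eqref{eq:main} of Theorem \ref{thm:main} yields
\begin{equation*}
\|M(\cdot\sigma)\|_{L^p(w)}\leq Cp'\left([w,\sigma]_{A_p}\,\frac{[\sigma]_{A_\infty}}{p(p')^{p-1}}\right)^{1/p}
= \frac{Cp'}{p^{1/p}(p')^{1/p'}}\left([w,\sigma]_{A_p}[\sigma]_{A_\infty}\right)^{1/p},
\end{equation*}
and since $x^{1/x}\geq 1$ for $x\geq 1$ makes the numerical prefactor bounded, we obtain the desired estimate \eqref{eq:mixed-two}. The only delicate point in this plan is bookkeeping the multiplicative constants coming from the normalization of the Young functions $\Phi$ and $\bar\Phi$; once this is done, the corollary is immediate from Theorem \ref{thm:main}.
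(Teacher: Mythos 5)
Your approach is essentially identical to the paper's: take $\Phi(t)=t^{p'}$, identify $[w,\sigma,\Phi]_{A_p}=[w,\sigma]_{A_p}$, and use \eqref{eq:WpPhi-p--Ainfty} to identify $[\sigma,\bar\Phi]_{W_p}$ with $[\sigma]_{A_\infty}$. The paper simply adopts the convention (stated in Section \ref{sec:Orlicz}) that the conjugate of $t^{p'}$ is $t^p$, sidestepping the Legendre-transform normalization constants that you track explicitly; since the constant $C$ in Theorem \ref{thm:main} is only claimed to be structural, this bookkeeping is optional here. One small slip: the scaling of the Luxemburg norm should be $\|f\|_{cG,B}=c^{1/p}\|f\|_{G,B}$ for $G(t)=t^p$ (a larger Young function yields a larger norm), not $c^{-1/p}$; however, your displayed identity $[\sigma,\bar\Phi]_{W_p}=\tfrac{1}{p(p')^{p-1}}[\sigma]_{A_\infty}$ is already consistent with the correct exponent, and the bounded prefactor argument goes through unchanged, so the conclusion stands.
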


Note that the result in Theorem \ref{thm:main} involves two suprema like in Corollary \ref{cor:mixed-two-weight}.  It would interesting to find out if there is a version of this result involving only one supremum. There is some evidence that it could be the case, see for example \cite{HP}, Theorem 4.3. See also the recent work \cite{LM}.

As a second consequence of  Theorem \ref{thm:main}, we have the announced quantitative version of Theorem \ref{thm:perez-bump}:

\begin{corollary}\label{cor:precise-bump}
Under the same hypothesis of Theorem \ref{thm:main}, we have that there exists a structural constant $C>0$ such that
\begin{equation*}
\|M(\cdot \sigma)\|_{L^p(w)} \leq C p' [w,\sigma,\Phi]_{A_p}^{1/p} \|M_{\bar\Phi}\|_{L^p(\mathbb{R}^n)}
\end{equation*}
\end{corollary}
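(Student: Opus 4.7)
The plan is to derive this corollary directly from Theorem \ref{thm:main} by bounding the Wilson-type quantity $[\sigma,\bar\Phi]_{W_p}$ in terms of the $L^p$ operator norm of $M_{\bar\Phi}$. The setup has been arranged precisely so that this reduction is almost automatic.

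First I would apply Theorem \ref{thm:main} with the given Young function $\Phi$, whose conjugate is $\bar\Phi$, to obtain
\[
\|M(\cdot \sigma)\|_{L^p(w)} \leq C p'\left([w,\sigma,\Phi]_{A_p}[\sigma,\bar\Phi]_{W_p}\right)^{1/p}.
\]
It then remains to control the second factor. For any ball $B$, since $M_{\bar\Phi}$ is bounded on $L^p$,
\[
\int_B M_{\bar\Phi}\bigl(\sigma^{1/p}\chi_B\bigr)^p \, d\mu \;\leq\; \bigl\|M_{\bar\Phi}\bigr\|_{L^p}^p \, \bigl\|\sigma^{1/p}\chi_B\bigr\|_{L^p}^p \;=\; \bigl\|M_{\bar\Phi}\bigr\|_{L^p}^p \, \sigma(B),
\]
where I used that $\|\sigma^{1/p}\chi_B\|_{L^p}^p = \int_B \sigma \, d\mu = \sigma(B)$. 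Dividing by $\sigma(B)$ and taking the supremum over balls gives
\[
[\sigma,\bar\Phi]_{W_p} \;\leq\; \bigl\|M_{\bar\Phi}\bigr\|_{L^p}^p.
\]
Substituting this into the bound from Theorem \ref{thm:main} produces the claimed inequality.

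There is essentially no obstacle here: the exponent $1/p$ inside $M_{\bar\Phi}$ in the definition of $[\sigma,\bar\Phi]_{W_p}$ is chosen exactly so that testing it against $\sigma^{1/p}\chi_B$ produces $\sigma(B)$ in the $L^p$ norm, which cancels the denominator. This is precisely the mechanism that makes Theorem \ref{thm:main} a quantitative refinement of Theorem \ref{thm:perez-bump}: the sufficient condition \eqref{keySufficientCondition}, with $X=L_\Phi$, matches $[w,\sigma,\Phi]_{A_p}$, and the boundedness assumption \eqref{bnessX'} on $M_{X'}=M_{\bar\Phi}$ is what closes the estimate, now with an explicit dependence on the operator norm.
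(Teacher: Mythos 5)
Your proof is correct and is exactly the paper's argument: apply Theorem \ref{thm:main} and then bound $[\sigma,\bar\Phi]_{W_p}\le \|M_{\bar\Phi}\|_{L^p}^p$ by testing the $L^p$ boundedness of $M_{\bar\Phi}$ against $\sigma^{1/p}\chi_B$, which reproduces $\sigma(B)$ and cancels the denominator.
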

We remark that this approach produces a non-optimal dependence on $p$, since we have to pay with one $p'$ for using Sawyer's theorem. However, the ideas from the proof of Theorem \ref{thm:main} can be used to derive a direct proof of Corollary \ref{cor:precise-bump} without the $p'$ factor. We include the proof in the appendix.

Finally, for the one weight problem, we recover the known mixed bound.
\begin{corollary}\label{cor:mixed-one-weight}
For any $A_p$ weight $w$ the following mixed bound holds:
\begin{equation*}
\|M\|_{L^p(w)} \leq  C p' \left([w]_{A_p}[\sigma]_{A_\infty}\right)^{1/p}
\end{equation*} 
where $C$ is an structural constant and as usual $\sigma=w^{1-p'}$ is the dual weight.
\end{corollary}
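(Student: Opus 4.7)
The plan is to deduce Corollary \ref{cor:mixed-one-weight} directly from the two-weight mixed bound in Corollary \ref{cor:mixed-two-weight} by the standard change of variables $f = g\sigma$. Since $\sigma = w^{1-p'}$ is strictly positive a.e., the map $g \mapsto f = g\sigma$ is a bijection between measurable functions, so testing $M$ on $L^p(w)$ against functions of the form $g\sigma$ loses nothing.

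First I would record the norm identity. Writing $f = g\sigma$ one has $Mf = M(g\sigma)$ pointwise, while
\begin{equation*}
\|f\|_{L^p(w)}^p = \int |g|^p \sigma^p w \, d\mu = \int |g|^p \sigma \, d\mu = \|g\|_{L^p(\sigma)}^p,
\end{equation*}
where I used the arithmetic identity $(1-p')(1-p) = 1$ (equivalently $p + p' = pp'$), which gives $\sigma^{p-1} w = w^{(1-p')(p-1)+1} = w^0 = 1$, i.e.\ $\sigma^p w = \sigma$. Therefore
\begin{equation*}
\|M\|_{L^p(w)\to L^p(w)} = \sup_{f\in L^p(w)} \frac{\|Mf\|_{L^p(w)}}{\|f\|_{L^p(w)}} = \sup_{g\in L^p(\sigma)} \frac{\|M(g\sigma)\|_{L^p(w)}}{\|g\|_{L^p(\sigma)}} = \|M(\cdot\sigma)\|_{L^p(w)}
\end{equation*}
in the notation of \eqref{MainQuestion}.

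Next I would verify that the two-weight $A_p$ constant collapses to the classical one for this choice of $\sigma$. Indeed, by definition
\begin{equation*}
[w,\sigma]_{A_p} = \sup_B \left(\avgint_B w \, d\mu\right)\left(\avgint_B \sigma \, d\mu\right)^{p-1} = \sup_B \left(\avgint_B w \, d\mu\right)\left(\avgint_B w^{1-p'} \, d\mu\right)^{p-1} = [w]_{A_p}.
\end{equation*}

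Finally, I would apply Corollary \ref{cor:mixed-two-weight} to the pair $(w,\sigma)$ to obtain
\begin{equation*}
\|M\|_{L^p(w)} = \|M(\cdot\sigma)\|_{L^p(w)} \le C p' \left([w,\sigma]_{A_p}[\sigma]_{A_\infty}\right)^{1/p} = C p' \left([w]_{A_p}[\sigma]_{A_\infty}\right)^{1/p},
\end{equation*}
which is the claimed bound. There is no real obstacle here; the only point that could conceivably trip one up is the book-keeping for the exponents in the change of variables, and in particular verifying that $\sigma^p w = \sigma$, which relies precisely on the duality relation $p + p' = pp'$.
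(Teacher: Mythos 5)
Your proof is correct and is exactly the argument the paper has in mind; the paper simply states that the deduction from Corollary \ref{cor:mixed-two-weight} is trivial and leaves the change of variables $f=g\sigma$, the exponent identity $\sigma^p w=\sigma$, and the collapse $[w,\sigma]_{A_p}=[w]_{A_p}$ to the reader. There is nothing to add or correct.
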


\begin{remark} 
To be able to extend the proofs to this general scenario, we need to use (and prove) suitable versions of classical tools on this subject, such as Calder\'on--Zygmund decompositions. We remark that in previous works (\cite{PW-JFA}, \cite{SW}) most of the results are proved under the assumption that the space has non-empty annuli. The main consequence of this property is that in that case the measure $\mu$ enjoys a reverse doubling property, which is crucial in the proof of Calder\'on--Zygmund type lemmas. However, this assumption implies, for instance, that the space has infinite measure and no atoms (i.e. points with positive measure) and therefore constraints the family of spaces under study.  Recently, some of those results were proven without this hypothesis, see for example \cite{Pradolini-Salinas}. Here we choose to work without the annuli property and therefore we need to adapt the proofs from \cite{PW-JFA}. Hence, we will need to consider separately the cases when the space has finite or infinite measure. An important and useful result on this matter is the following:
\end{remark}

\begin{lemma}[\cite{vili}]\label{lem:bounded-finite}
Let  $(\mathcal S,\rho,\mu)$ be a space of homogeneous type. Then $\mathcal{S}$ is  bounded if and only if $\mu(\mathcal S)<\infty$. 
\end{lemma}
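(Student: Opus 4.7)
The forward direction is essentially immediate: if $\mathcal{S}$ is bounded, then $\mathcal{S}\subseteq B(x_0,R)$ for some ball, and the standing SHT assumption that every ball has finite measure gives $\mu(\mathcal{S})\le \mu(B(x_0,R))<\infty$.

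The plan for the converse is to prove the contrapositive: assuming $\mathcal{S}$ is unbounded, I would show $\mu(\mathcal{S})=\infty$. Fix $x_0\in\mathcal{S}$ and set $F(R):=\mu(B(x_0,R))$, so that $\mu(\mathcal{S})=\lim_{R\to\infty}F(R)$ by monotone convergence, since $\{B(x_0,R)\}_{R>0}$ exhausts $\mathcal{S}$. It therefore suffices to produce constants $\alpha>1$ and $\theta>1$, depending only on the quasi-metric constant $K$ and the doubling constant of $\mu$, such that $F(\alpha R)\ge\theta F(R)$ whenever $\mathcal{S}\not\subseteq B(x_0,R)$; iterating along $R_n=\alpha^n R_0$ then yields $F(R_n)\ge\theta^n F(R_0)\to\infty$.

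To obtain the key inequality at scale $R$, I would use the unboundedness to select some $y\in\mathcal{S}$ with $\rho(x_0,y)$ of order $KR$. The quasi-triangle inequality forces $B(x_0,R)\cap B(y,R)=\emptyset$, while both balls lie inside $B(x_0,\alpha R)$ for an appropriate structural $\alpha=\alpha(K)$. A bounded number of applications of the doubling property (depending only on $K$) transport mass from $B(x_0,R)$ to $B(y,R)$ through the common enveloping ball $B(y,\alpha R)\supset B(x_0,R)$, yielding $\mu(B(y,R))\ge c\,\mu(B(x_0,R))$ for some $c=c(K,D)>0$. Adding the measures of the two disjoint balls then gives the desired $F(\alpha R)\ge(1+c)F(R)$.

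The main obstacle, as emphasized in the remark preceding the lemma, is the absence of the non-empty annuli condition: one cannot in general choose $y$ with $\rho(x_0,y)$ lying in a prescribed narrow window $[2KR,(2K+1)R]$, because large gaps in the support of $\mu$ are a priori allowed. The fix is to pick any $y$ with $\rho(x_0,y)\ge 2KR$ (which exists by unboundedness), let the effective scale $\alpha_y R:=K(\rho(x_0,y)+R)$ depend on $y$, and absorb this dependence by passing to a dyadic subsequence $R_n\uparrow\infty$ along which $F(R_{n+1})\ge(1+c)F(R_n)$; divergence of $F$ is unaffected, and the global conclusion $\mu(\mathcal{S})=\infty$ follows.
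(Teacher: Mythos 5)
First, note that the paper does not prove this lemma at all --- it is cited verbatim from the book of Genebashvili--Gogatishvili--Kokilashvili--Krbec (\cite{vili}) --- so there is no ``paper proof'' to compare against; your attempt has to be judged on its own.

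The forward direction is fine. The converse, however, has a genuine gap at the step where you claim ``$\mu(B(y,R))\ge c\,\mu(B(x_0,R))$ for some $c=c(\kappa,D)>0$'' via ``a bounded number of applications of the doubling property (depending only on $\kappa$).'' The number of doubling steps needed to pass from $B(y,R)$ to a ball around $y$ that swallows $B(x_0,R)$ is of order $\log_2\bigl(\rho(x_0,y)/R\bigr)$, which is \emph{not} controlled by $\kappa$ once you drop the annulus hypothesis and allow $\rho(x_0,y)\gg R$. Your proposed ``fix'' of letting $\alpha_y$ depend on $y$ does not repair this: the resulting gain factor in $F(\alpha_y R)\ge\theta_y F(R)$ degenerates, $\theta_y\to 1$ as $\alpha_y\to\infty$, and a product $\prod_n\theta_{y_n}$ of such factors has no reason to diverge, so the claim ``$F(R_{n+1})\ge(1+c)F(R_n)$ along a dyadic subsequence with a structural $c$'' is exactly what is missing, not something you can simply assert. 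The standard way around the missing annulus condition, which your argument does not contain, is to scale the radii of \emph{both} small balls with the distance: pick $y$ with $d=\rho(x_0,y)>R$, set $r=d/(2\kappa)$, and work with the disjoint balls $B(x_0,r)$ and $B(y,r)$. Both lie in $B(x_0,2\kappa d)$, and now $B(x_0,2\kappa d)\subset B(y,3\kappa^2 d)$, so comparing $\mu\bigl(B(y,r)\bigr)$ to $\mu\bigl(B(x_0,2\kappa d)\bigr)$ needs only a \emph{fixed} dilation factor $6\kappa^3$, hence a structural constant $C_2$. Writing $A=\mu(B(x_0,2\kappa d))$, $a=\mu(B(x_0,r))$, $b=\mu(B(y,r))$, one has $A\ge a+b$ and $b\ge A/C_2$, so $A\ge \tfrac{C_2}{C_2-1}\,a$, i.e.\ $F(2\kappa d)\ge\theta F(d/2\kappa)$ with a structural $\theta>1$. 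Iterating along $d_{n+1}\ge 4\kappa^2 d_n$ (such points exist by unboundedness) then yields $F\to\infty$. It is this rescaling of the small radii to the scale $d$ --- which keeps the doubling count bounded --- that your proposal lacks.
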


\subsection{Outline}
The article is organized as follows. In Section \ref{sec:prelim} we summarize some basic needed results on spaces of homogeneous type and Orlicz spaces. We also include a Calder\'on--Zygmund type decomposition lemma. In Section \ref{sec:proofs} we present the proofs of our results. Finally, we include in Section  \ref{sec:appendix} an Appendix with a direct proof of a slightly better result than Corollary \ref{cor:mixed-two-weight}.

\section{preliminaries}\label{sec:prelim}

In this section we first summarize some basic aspects regarding spaces of homogeneous type and Orlicz spaces. Then, we include a Calder\'on--Zygmund (C--Z) decomposition lemma adapted to our purposes.

\subsection{Spaces of homogeneous type}\label{sec:SHT}
A quasimetric $d$ on a set $\mathcal{S}$ is a function $d:{\mathcal S} \times
{\mathcal S} \rightarrow [0,\infty)$ which satisfies
\begin{enumerate}
 \item $d(x,y)=0$ if and only if $x=y$;
\item $d(x,y)=d(y,x)$ for all $x,y$;
 \item there exists a finite constant $\kappa \ge 1$ such that, for all $x,y,z \in \mathcal{S}$,
\begin{equation*}
d(x,y)\le \kappa (d(x,z)+d(z,y)).
\end{equation*}
\end{enumerate}

Given $x \in \mathcal{S}$ and $r > 0$,  we define the ball with center $x$ and radius $r$, $B(x,r) := \{y \in {\mathcal{S}} :d(x,y) < r\}$ and we denote its radius $r$ by $r(B)$ and its center $x$ by $x_B$. 
A space of homogeneous type $({\mathcal{S}},d,\mu)$ is a set $\mathcal{S}$ endowed with a quasimetric $d$ and a doubling  nonnegative Borel measure $\mu$ such that 
\begin{equation}\label{eq:doubling}
 \mu(B(x,2r)) \le C\mu(B(x,r))
\end{equation}

Let $C_\mu$ be the smallest constant satisfying \eqref{eq:doubling}. Then $D_\mu = \log_2 C_\mu$ is called the doubling order of $\mu$. It follows that 
\begin{equation}
\frac{\mu(B)}{\mu(\tilde{B})} \le
C^{2+\log_2\kappa}_{\mu}\left(\frac{r(B)}{r(\tilde{B})}\right)^{D_\mu} \;\mbox{for all
balls}\; \tilde{B} \subset B. 
\end{equation}
In particular for $\lambda>1$ and $B$ a  ball, we have that
\begin{equation}\label{eq:doublingDIL}
 \mu(\lambda B) \le (2\lambda)^{D_\mu}   \mu(B).
\end{equation}
Here, as usual, $\lambda B$ stands for  the dilation of a ball $B(x,\lambda r)$ with $\lambda>0$.
Throughout this paper, we will say that a constant $c=c(\kappa,\mu)>0$ is a \emph{structural constant} if it  depends only on the quasimetric constant $\kappa$ and the doubling constant $C_\mu$. 

An elementary but important property of the quasimetric is the following. Suppose that we have two balls $B_1=B(x_1,r_1)$ and $B_2=B(x_2,r_2)$ with non empty intersection. Then,
\begin{equation}\label{eq:engulfing}
 r_1\le r_2 \Longrightarrow B_1\subset \kappa(2\kappa+1)B_2.
\end{equation}
This is usually known as the ``engulfing'' property and follows directly from the quasitriangular property of the quasimetric.

In a general space of homogeneous type, the balls $ B(x,r)$ are not necessarily open, but by a theorem of
Macias and Segovia \cite{MS}, there is a continuous quasimetric
$d'$ which is equivalent to $d$ (i.e., there are positive
constants $c_{1}$ and $c_{2}$ such that $c_{1}d'(x,y)\le d(x,y)
\le c_{2}d'(x,y)$ for all $x,y \in \mathcal{S}$) for which every ball is
open. We always assume that the quasimetric $d$ is continuous and
that balls are open.

We will adopt the usual notation: if $\nu$ is a measure and $E$ is a measurable set, $\nu(E)$ denotes the $\nu$-measure of $E$. Also, if $f$ is a measurable function on $(\mathcal S,d,\mu)$ and $E$ is a measurable set, we will use the notation $f(E):=\int_E f(x)\ d\mu$. 
We also will denote the $\mu$-average of $f$ over a ball $B$ as $f_{B} = \avgint_B f d\mu$.
We recall that a weight $w$ (any non negative measurable function) satisfies the $A_p$ condition for $1<p<\infty$ if
\begin{equation*}
   [w]_{A_p}:=\sup_B\left(\avgint_B w\ d\mu\right)\left(\avgint_B w^{-\frac{1}{p-1}}\ d\mu\right)^{p-1},
\end{equation*}
where the supremum is taken over all the balls in $\mathcal{S}$. The $A_{\infty}$ class is defined in the natural way by $A_{\infty}:=\bigcup_{p>1}A_p$

This class of weights can also be characterized by means of an appropriate constant. In fact, there are various different definitions of this constant, all of them equivalent in the sense that  they define the same class of weights. Perhaps the more classical and known definition is the following  due  to Hru\v{s}\v{c}ev
\cite{Hruscev} (see also \cite{GCRdF}):
\begin{equation*}
[w]^{exp}_{A_\infty}:=\sup_B \left(\avgint_{B} w\,d\mu\right) \exp \left(\avgint_{B} \log w^{-1}\,d\mu  \right).
\end{equation*}
However, in \cite{HP} the authors use a ``new'' $A_\infty$ constant (which was originally introduced implicitly  by Fujii in \cite{Fujii} and later by Wilson in \cite{Wilson:87}), which seems to be better suited. For any $w\in A_\infty$, we define
 \begin{equation}\label{eq:Ainfty}
     [w]_{A_\infty}:= [w]^{W}_{A_\infty}:=\sup_B\frac{1}{w(B)}\int_B M(w\chi_B )\ d\mu,
 \end{equation}
where $M$ is the usual Hardy--Littlewood maximal operator. When the underlying space is $\mathbb{R}^d$, it is easy to see that $[w]_{A_\infty}\le c [w]^{exp}_{A_\infty}$ for some structural $c>0$.  In fact, it is shown in \cite{HP} that there are examples  showing that $[w]_{A_\infty}$ is much smaller than $[w]^{exp}_{A_\infty}$
The same line of ideas yields the inequality in this wider scenario. See the recent work of Beznosova and Reznikov \cite{BR} for a comprehensive and thorough study of these different $A_\infty$ constants.
We also refer the reader to the forthcoming work of Duoandikoetxea, Martin-Reyes and Ombrosi  \cite{DMRO} for a discussion regarding different definitions of $A_\infty$ classes.

\subsection{Orlicz spaces}\label{sec:Orlicz}
We recall here some basic definitions and facts about Orlicz spaces.

A function $\Phi:[0,\infty) \rightarrow [0,\infty)$ is called a Young function if it is continuous, convex, increasing and satisfies  $\Phi(0)=0$ and $\Phi(t) \rightarrow \infty$ as $t \rightarrow \infty$.  For Orlicz spaces, we are usually only concerned about the behaviour of Young functions for $t$ large. 
The space $L_{\Phi}$ is a Banach function space with the Luxemburg norm
\[
\|f\|_{\Phi} =\|f\|_{\Phi,\mu} =\inf\left\{\lambda >0: \int_{\mathcal{S}}
\Phi( \frac{ |f|}{\lambda }) \, d\mu \le 1 \right\}.
\]
Each Young function $\Phi$ has an associated complementary Young function $\bar{\Phi}$ satisfying
\begin{equation*}
t\le \Phi^{-1}(t)\bar{\Phi}^{-1}(t) \le 2t \label{propiedad}
\end{equation*}
for all $t>0$. The function $\bar{\Phi}$ is called the conjugate
of $\Phi$, and the  space $L_{\bar{\Phi}}$ is called the conjugate
space of $L_{\Phi}$. For example, if $\Phi(t) = t^p$ for $1 < p <
\infty$, then $\bar{\Phi}(t) = t^{p'}, p' = p/(p-1)$, and the
conjugate space of $L^p(\mu)$ is $L^{p'}(\mu)$.

A very important property of Orlicz spaces is the generalized
H\"older inequality
\begin{equation}\label{eq:HOLDERglobal}
\int_{\mathcal{S}} |fg|\, d\mu \le 2 \|f\|_{\Phi}\|g\|_{\bar{\Phi}}. 
\end{equation}
Now we introduce local versions of Luxemburg norms. If $\Phi$ is a Young function, let
\begin{equation*}
\|f\|_{\Phi,B} =\|f\|_{\Phi,B,\mu}= \inf\left\{\lambda >0:
\frac{1}{\mu(B)}\int_{B} \Phi\left(\frac{ |f|}{ \lambda }\right)  \,
d\mu \le 1\right\}.
\end{equation*} 
Furthermore, the local version of the generalized H\"older inequality (\ref{eq:HOLDERglobal}) is
\begin{equation}\label{eq:HOLDERlocal}
\frac{1}{\mu(B)}\int_{B}fg\, d\mu \le 2 \|f\|_{\Phi,B}\|g\|_{\bar{\Phi},B}. 
\end{equation}
Recall the definition of the maximal type operators $M_\Phi$ from \eqref{eq:maximaltype}:
\begin{equation}\label{eq:maximaltype-SHT}
M_{\Phi}f(x)= \sup_{B:x\in B} \|f\|_{\Phi,B}.
\end{equation}
An important fact related to this sort of operator is that its boundedness is related to the so called $B_p$ condition. For any  positive function  $\Phi$ (not necessarily a Young function), we have that 
\begin{equation*}
\|M_{\Phi}\|^p_{L^{p}(\mathcal{S})}\,   \leq c_{\mu,\kappa}\, \alpha_{p}(\Phi),
\end{equation*}
where $\alpha_{p}(\Phi)$ is the following tail condition 
\begin{equation}\label{eq:Phi-p}
\alpha_{p}(\Phi)= \,\int_{1}^{\infty} \frac{\Phi(t)} { t^p }
\frac{dt}{t} < \infty.
\end{equation}
It is worth noting that in the recent article \cite{LL} the authors define the appropriate analogue of the $B_p$ condition in order to characterize the boundedness of the \emph{strong} Orlicz-type maximal function  defined over rectangles both in the linear and multilinear cases. Recent developments and improvements can also be found in \cite{Masty-Perez}, where the authors addressed the problem of studying the maximal operator between Banach function spaces.

\subsection{Calder\'on--Zygmund decomposition for spaces of homogeneous type}

\

The following lemma is a classical result in the theory, regarding a decomposition of a generic level set of the Hardy--Littlewood maximal function $M$. Some variants can be found in \cite{AimarPAMS} for $M$ and in \cite{AimarTAMS} for the centered maximal function $M^c$ . In this latter case, the proof is straightforward. We include here a detailed proof for the general case of $M$ where some extra subtleties are needed.
%
%
%

\begin{lemma}[Calder\'on--Zygmund decomposition]\label{lem:stoppingtime} 
Let $B$ be a fixed ball and let $f$ be a bounded nonnegative measurable function. Let $M$ be the usual non centered Hardy--Littlewood maximal function. Define the set $\Omega_{\lambda}$ as 
\begin{equation}\label{eq:omegalambda}
\Omega_\lambda = \{x \in  B:  Mf(x) >\lambda\},
\end{equation} 
Let $\lambda>0$ be such that $\lambda\ge \avgint_B f\ d\mu$. If $\Omega_{\lambda}$ is non-empty, then given $\eta > 1$, there exists a countable family
$\{B_i\}$ of pairwise disjoint balls such that, for $\theta=4\kappa^2+\kappa$,
\begin{itemize}
\item[i)] $\displaystyle \cup_{i} B_{i}\subset \Omega_{\lambda} \subset
\cup_{i} \theta B_{i}$,  
\item[ii)] For all $i$, 
\begin{equation*}
\lambda <\frac{1}{\mu(B_i)} \int_{B_i} f d\mu.
\end{equation*}
\item[iii)] If $B$ is any ball such that $B_i\subset B$ for some $i$ and $r(B)\ge \eta r(B_i)$, we have that
\begin{equation}\label{eq:ballmaximal1}
\frac{1}{\mu(\eta B)} \int_{\eta B} f d\mu  \le \lambda.
\end{equation} 
\end{itemize}
\end{lemma}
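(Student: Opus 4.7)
The proof plan is a stopping-time / Vitali argument adapted to the quasimetric. For each $x \in \Omega_\lambda$, the defining inequality $Mf(x) > \lambda$ produces at least one ball $B' \ni x$ with $\avgint_{B'} f \, d\mu > \lambda$; because $M$ is uncentered and the supremum of admissible radii need not be attained, we must choose an approximate maximizer, and the parameter $\eta > 1$ measures the approximation.

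The key quantity is
\[
R(x) := \sup\bigl\{r(B'):\,x \in B',\ \avgint_{B'} f \,d\mu > \lambda\bigr\},
\]
and for each $x \in \Omega_\lambda$ we select $B_x \ni x$ with $\avgint_{B_x} f\,d\mu > \lambda$ and $r(B_x) > R(x)/\eta$. The main technical obstacle is to establish that $R(x) < \infty$, since otherwise the Vitali selection below has nothing to work with. Here the hypothesis $\lambda \geq \avgint_B f\,d\mu$ enters, combined with the doubling property \eqref{eq:doublingDIL} and the engulfing property \eqref{eq:engulfing}: for any candidate $B' \ni x$ of radius much larger than $r(B)$, engulfing places $B$ inside a structural dilate of $B'$, and a doubling-based comparison together with the boundedness of $f$ forces $\avgint_{B'} f\,d\mu$ below $\lambda$ as $r(B')$ grows. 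Lemma~\ref{lem:bounded-finite} is used to handle the bounded and unbounded regimes for $\mathcal{S}$ uniformly.

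With the radii $r(B_x)$ uniformly bounded, a Vitali-type argument extracts the disjoint subfamily. Greedily choose $B_1$ with $r(B_1) > \tfrac12 \sup_x r(B_x)$ and, inductively, $B_k$ disjoint from $B_1, \dots, B_{k-1}$ with $r(B_k)$ at least half the supremum of radii of those $B_x$ disjoint from $B_1 \cup \dots \cup B_{k-1}$. Any $B_x$ must then meet some $B_i$ with $r(B_x) < 2 r(B_i)$, for otherwise the greedy rule would have retained $B_x$. Picking $y \in B_x$ and $z \in B_x \cap B_i$ and applying the quasitriangle inequality twice gives
\[
d(y, x_{B_i}) \leq \kappa\bigl(d(y,z) + d(z, x_{B_i})\bigr) < \kappa\bigl(2\kappa\, r(B_x) + r(B_i)\bigr) < (4\kappa^2 + \kappa)\, r(B_i),
\]
so $B_x \subset \theta B_i$ with $\theta = 4\kappa^2 + \kappa$. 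This produces the right-hand inclusion in (i); the left-hand inclusion follows from $Mf(y) \ge \avgint_{B_i} f\,d\mu > \lambda$ for $y \in B_i$, once the finiteness step has constrained each $B_i$ to sit inside $B$, and (ii) is immediate from the defining inequality of $B_i$.

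Finally, for (iii), suppose $B' \supset B_i$ with $r(B') \geq \eta r(B_i)$, and let $x_i \in B_i$ be the reference point from the selection. Then $x_i \in B_i \subset B' \subset \eta B'$, and the choice $r(B_i) > R(x_i)/\eta$ yields
\[
r(\eta B') = \eta r(B') \geq \eta^2 r(B_i) > \eta R(x_i) > R(x_i).
\]
Since any ball containing $x_i$ with radius strictly exceeding $R(x_i)$ has average at most $\lambda$ by the very definition of $R(x_i)$, we conclude $\avgint_{\eta B'} f\,d\mu \leq \lambda$, which closes the argument.
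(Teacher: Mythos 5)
Your plan follows the same route as the paper: introduce $R(x)$ as the supremum of admissible radii, argue $R(x)<\infty$, choose an approximate maximizer $B_x$ with $r(B_x)>R(x)/\eta$, and run a Vitali selection. The Vitali part and the verification of (iii) are carried out correctly and in more detail than the paper (which simply cites \cite{SW}, Lemma 3.3), and your derivation of $\theta=4\kappa^2+\kappa$ is right.

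The genuine gap is in the one step the paper actually spends its proof on, namely that $R(x)<\infty$. You gesture at it, but the mechanism you name is wrong. You claim it follows from $\lambda\ge\avgint_B f\,d\mu$, engulfing placing $B$ in a dilate of $B'$, doubling, and ``the boundedness of $f$.'' Boundedness of $f$ cannot force averages down over large balls: a bounded $f$ on an infinite-measure space can have all averages equal to a fixed positive constant. Nor does $\lambda\ge\avgint_B f\,d\mu$ enter here at all in the paper's argument. What the paper actually uses is the \emph{integrability} of $f$ together with the fact (via Lemma~\ref{lem:bounded-finite}) that when $\mathcal S$ is unbounded, $\mu(B(x,r))\to\infty$ as $r\to\infty$. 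Concretely: fix $r_1$ so large that $\mu(B(x,r_1))$ exceeds $2(2K)^{D_\mu}\|f\|_{L^1}/\lambda$ with $K=\kappa(2\kappa+1)$; if some admissible ball $B_2\ni x$ had radius $r_2>r_1$, engulfing gives $B(x,r_1)\subset KB_2$ and doubling gives $\mu(B_2)\ge 2\|f\|_{L^1}/\lambda$, while the condition $\avgint_{B_2}f\,d\mu>\lambda$ forces $\mu(B_2)<\|f\|_{L^1}/\lambda$ --- a contradiction. You need this argument (or an equivalent one), and the ingredients you list do not supply it.

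A smaller point: you justify $\cup_i B_i\subset\Omega_\lambda$ by saying the finiteness step ``constrained each $B_i$ to sit inside $B$.'' It does no such thing --- boundedness of $R(x)$ bounds the radii but does not put the balls inside $B$. (The paper glosses over this too, and only the right-hand inclusion $\Omega_\lambda\subset\cup_i\theta B_i$ together with (ii) and (iii) is used downstream, so it is not fatal, but the justification as you wrote it is not correct.)
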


\begin{proof}
Define, for each $x\in\Omega_\lambda$, the following set:
 \begin{equation*}
 \mathcal{R}^\lambda_x=\left\{r>0: \avgint_{B} f\ d\mu >\lambda, x\in B=B(y,r)\right\},
 \end{equation*}
which is clearly non-empty. The key here is to prove that $\mathcal{R}^\lambda_x$ is bounded. If the whole space is bounded, there is nothing to prove. In the case of unbounded spaces, we argue as follows. 
Since the space is of infinite measure (recall Lemma \ref{lem:bounded-finite}), and clearly $S=\bigcup_{r>0} B(x,r)$,
we have that $\mu(B(x,r))$ goes to $+\infty$ when $r\to\infty$ for any $x\in \mathcal{S}$. Therefore, for $K=\kappa(2\kappa+1)$, we can choose $r_1$ such that the ball $B_1=B(x,r_1)$ satisfies the inequality
\begin{equation*}
\mu(B_1)\ge \frac{2(2K)^{D_\mu}\|f\|_{L^1}}{\lambda}
\end{equation*}
Suppose now that $\sup\mathcal{R}^\lambda_x=+\infty$. Then we can choose a ball $B_2=B(y,r_2)$ for some $y$ such that $x\in B_2$, $\avgint_{B_2}f\ d\mu>\lambda$ and $r_2>r_1$. Now, by the engulfing property \eqref{eq:engulfing} we obtain that $B_1\subset KB_2$. The doubling condition \eqref{eq:doublingDIL} yields 
\begin{equation*}
 \mu(B_1)\le \mu(KB_2)\le (2k)^{D_\mu}\mu(B_2)
\end{equation*}
Then we obtain that
\begin{equation*}
\frac{2\|f\|_{L^1}}{\lambda}\le \mu(B_2)
<  \frac{\|f\|_{L^1}}{\lambda}
\end{equation*}
which is a contradiction. We conclude that, in any case, for any $x\in \Omega_\lambda$, we have that $\sup \mathcal{R}^\lambda_x<\infty$.

Now fix $\eta>1$. If $x \in \Omega_\lambda$, there is a ball $B_{x}$ containing $x$, whose radius $r(B_x)$ satisfies $\frac{\sup \mathcal{R}^\lambda_x}{\eta} < r(B_x)\leq \sup \mathcal{R}^\lambda_x$, and for which $\avgint_{B_x} f \ d\mu > \lambda$. Thus the ball $B_x$ satisfies ii) and iii). Also
note that $\Omega_\lambda = \bigcup_{x\in \Omega_{\lambda}} B_x$.
Picking a Vitali type subcover of $\{B_{x}\}_{x\in
\Omega_{\lambda}}$ as in \cite{SW}, Lemma 3.3, we obtain a
family of pairwise disjoint balls $\{B_{i}\} \subset
\{B_{x}\}_{x\in \Omega_{\lambda}}$ satisfying i).  Therefore
$\{B_i\}$ satisfies i), ii) and iii).
\end{proof}

We will need another important lemma, in order to handle simultaneously decompositions of level sets at different scales. 

\begin{lemma}\label{lem:disjointing}
Let $B$ be a ball and let $f$ be a bounded nonnegative measurable function. Let also $a \gg 1$ and, for each integer $k$ such that $a^k>\avgint_B f\ d\mu$, we define $\Omega_{k}$ as
\begin{equation}\label{eq:Omega-k}
\Omega _{k} = \left\{x\in B: Mf(x) >a^{k}  \right\}, 
\end{equation}
Let $\{E_i^k\}_{i,k}$ be defined by $E_i^k=B_i^k\setminus \Omega_{k+1}$, where the family of balls $\{B_i^k\}_{i,k}$ is obtained by applying Lemma \ref{lem:stoppingtime} to each $\Omega_k$.
Then, for $\theta=4\kappa^2+\kappa$ as in the previous Lemma and $\eta=\kappa^2(4\kappa+3)$, the following inequality holds:
\begin{equation}\label{eq:Bik vs Eik}
\mu(B_i^k\cap \Omega_{k+1})< \frac{(4\theta\eta)^{D_\mu}}{a}\mu(B_i^k).
\end{equation} 
Consequently, for sufficiently large $a$, we can obtain that
\begin{equation}\label{eq:Bik vs Eik one half}
\mu(B_i^k) \le 2\mu(E_i^k).
\end{equation}
\end{lemma}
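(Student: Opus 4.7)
The plan is to estimate $\mu(B_i^k\cap\Omega_{k+1})$ through a Calder\'on--Zygmund style decomposition reusing Lemma~\ref{lem:stoppingtime} at level $a^{k+1}$. By property (i) of that lemma, $\Omega_{k+1}\subset\bigcup_j\theta B_j^{k+1}$, so setting $J=\{j:\theta B_j^{k+1}\cap B_i^k\ne\emptyset\}$ and using doubling,
\[
\mu(B_i^k\cap\Omega_{k+1}) \le \sum_{j\in J}\mu(\theta B_j^{k+1}) \le (2\theta)^{D_\mu}\sum_{j\in J}\mu(B_j^{k+1}).
\]
Property (ii) at level $a^{k+1}$ gives $\mu(B_j^{k+1})\le a^{-(k+1)}\int_{B_j^{k+1}}f\,d\mu$, and disjointness of $\{B_j^{k+1}\}$ turns the right-hand side into $(2\theta)^{D_\mu}a^{-(k+1)}\int_{U}f\,d\mu$, with $U=\bigcup_{j\in J}B_j^{k+1}$.

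The key geometric step is to show $U\subset\eta B_i^k$, where $\eta=\kappa(\theta+2\kappa)=\kappa^2(4\kappa+3)$ is tuned precisely for this purpose. For each $j\in J$, pick $y\in\theta B_j^{k+1}\cap B_i^k$; two applications of the quasi-triangle inequality give, for any $z\in B_j^{k+1}$,
\[
d(z,x_{B_i^k}) < \kappa^2(1+\theta)\,r(B_j^{k+1}) + \kappa\,r(B_i^k),
\]
which is bounded by $\eta\,r(B_i^k)$ in the regime $r(B_j^{k+1})\le r(B_i^k)$. The main obstacle is therefore to rule out large $B_j^{k+1}$'s from $J$: if $r(B_j^{k+1})\ge r(B_i^k)$, the symmetric triangle estimate yields $B_i^k\subset\eta B_j^{k+1}$, and then property (iii) of Lemma~\ref{lem:stoppingtime} applied to $B_i^k$ with this enclosing ball forces an averaging bound on a dilate of $B_j^{k+1}$ that is incompatible with $\avgint_{B_j^{k+1}}f\,d\mu>a^{k+1}$, provided $a$ exceeds an explicit structural threshold. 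For such $a$, every $j\in J$ satisfies $r(B_j^{k+1})<r(B_i^k)$, and the previous display gives $U\subset\eta B_i^k$.

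A final application of property (iii) to $B_i^k$ with the enclosing ball $\eta B_i^k$, combined with doubling, bounds $\int_{\eta B_i^k}f\,d\mu$ by $a^k(2\eta)^{D_\mu}\mu(B_i^k)$. Plugging back,
\[
\mu(B_i^k\cap\Omega_{k+1}) \le \frac{(2\theta)^{D_\mu}(2\eta)^{D_\mu}}{a}\,\mu(B_i^k) = \frac{(4\theta\eta)^{D_\mu}}{a}\,\mu(B_i^k),
\]
which is \eqref{eq:Bik vs Eik}. Then \eqref{eq:Bik vs Eik one half} follows at once: taking $a\ge 2(4\theta\eta)^{D_\mu}$ forces $\mu(B_i^k\cap\Omega_{k+1})\le\tfrac12\mu(B_i^k)$, hence $\mu(E_i^k)=\mu(B_i^k)-\mu(B_i^k\cap\Omega_{k+1})\ge\tfrac12\mu(B_i^k)$.
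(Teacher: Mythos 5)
Your proof is correct and follows essentially the same route as the paper's: decompose $\Omega_{k+1}$ via the C--Z covering at level $a^{k+1}$, reduce to the disjoint balls $B_j^{k+1}$ meeting $B_i^k$, rule out those with radius larger than $r(B_i^k)$ by combining the engulfing estimate with property (iii) of Lemma~\ref{lem:stoppingtime}, absorb the remaining ones into a fixed dilate of $B_i^k$, and then apply property (iii) once more together with doubling. (One cosmetic point, present in the paper as well: the containment of the small $B_j^{k+1}$'s actually needs a dilation factor $\kappa^2(1+\theta)+\kappa$, which exceeds $\eta=\kappa(\theta+2\kappa)$ when $\kappa>1$; this only enlarges the structural constant and does not affect the argument.)
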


\begin{proof}
To prove the claim, we apply Lemma \ref{lem:stoppingtime} with $\eta=\kappa^2(4\kappa+3)$. Then, by part i), we have that, for $\theta=4\kappa^2+\kappa$
 \begin{equation*}
\Omega_{k+1}\subset \bigcup_m\theta B^{k+1}_m
\end{equation*}
and then
\begin{equation}\label{eq:decompBik-k+1}
\mu(B_{i}^{k}  \cap \Omega_{k+1} )\le \sum_{m} \mu( B_{i}^{k} \cap
\theta B_{m}^{k+1} ).
\end{equation}
Suppose now that $B_i^k\cap \theta B_m^{k+1}\neq \emptyset$. We claim that $r(B_{m}^{k+1})\le r(B_{i}^{k})$. Suppose the contrary, namely $r(B_{m}^{k+1})> r(B_{i}^{k})$. Then, by property \eqref{eq:engulfing}, we can see that $B_{i}^{k} \subset \kappa^2(4\kappa+3) B_{m}^{k+1}=\eta B_{m}^{k+1}$. 
For $B=\eta B_{m}^{k+1}$, part iii) from Lemma \ref{lem:stoppingtime} gives us that the average satisfies 
\begin{equation}\label{eq:avg-etaBmk+1}
 \frac{1}{\mu(B)}\int_B f\ d\mu\le a^k. 
\end{equation} 
Now, by the properties of the family $\{B_m^{k+1}\}_m$ and the doubling condition of $\mu$, we have that, for $a>(2\eta)^{D_\mu}$,
\begin{equation}\label{eq:ak}
\frac{1}{\mu(\eta B_{m}^{k+1})}\int_{\eta B_{m}^{k+1}} f\ d\mu>\frac{a^{k+1}}{ (2\eta)^{D_\mu}}>a^k.
\end{equation}
This last inequality contradicts \eqref{eq:avg-etaBmk+1}. Then, whenever $B_i^k\cap \theta B_m^{k+1}\neq \emptyset$, we have that 
$r(B_{m}^{k+1})\le r(B_{i}^{k})$ and from that it follows that $ B_{m}^{k+1}\subset \eta B_{i}^{k}$. The sum \eqref{eq:decompBik-k+1} now becomes
\begin{eqnarray*}
\mu(B_{i}^{k}  \cap \Omega_{k+1} )& \le & \sum_{m:B_{m}^{k+1}\subset \eta B_{i}^{k}} \mu( B_{j}^{k} \cap \theta B_{m}^{k+1} )\\
& \le &  (2\theta)^{D_\mu} \sum_{m:B_{m}^{k+1}\subset \eta B_{i}^{k}} \mu(B_{m}^{k+1} )\\
&\le & \frac{(2\theta)^{D_\mu}}{a^{k+1}}\int_{\eta B_i^k}f\ d\mu
\end{eqnarray*}
since the sets $\{B_m^{k+1}\}_m$ are pairwise disjoint. Finally, by part iii) of Lemma \ref{lem:stoppingtime}, we obtain
\begin{equation*}
 \mu(B_{i}^{k}  \cap \Omega_{k+1})\le \frac{(4\theta\eta)^{D_\mu}}{a}\mu(B_i^k),
\end{equation*}
which is inequality \eqref{eq:Bik vs Eik}.
\end{proof}

\section{Proofs of the main results}\label{sec:proofs}

We present here the proof or our main results. Our starting point is a version of the sharp two weight inequality \eqref{eq:moen} valid for SHT from \cite{kairema:twoweight}:

\begin{theorem}[\cite{kairema:twoweight}]\label{thm:kairema}
Let $(\mathcal{S},\rho,\mu)$ a SHT. Then the H--L maximal operator $M$ defined by \eqref{eq:maximal-SHT} satisfies the bound
\begin{equation}\label{eq:kairema}
 \left\|M(f\sigma)\right\|_{L^p(w)}\le C p'[w,\sigma]_{S_p}\|f\|_{L^p(\sigma)},
\end{equation} 
where $[w,\sigma]_{S_p}$ is the Sawyer's condition with respect to balls:
\begin{equation}
[w,\sigma]_{S_p}:=\sup_B  \left(\frac1{\sigma(B)} \int_B M(\sigma\chi_B)^pw\ d\mu\right)^{1/p}.
\end{equation} 
\end{theorem}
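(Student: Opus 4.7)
The plan is to prove the quantitative Sawyer-type inequality by combining the Calderón--Zygmund technology from Section~\ref{sec:prelim} with a direct use of the testing condition, mirroring Moen's strategy in the Euclidean setting. By standard density reductions I may assume $f\ge 0$ is bounded with compact support, so the level sets $\Omega_k:=\{M(f\sigma)>a^k\}$ have finite measure and are empty for all sufficiently large $k$.

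The first step is to fix $a$ large (say $a>(4\theta\eta)^{D_\mu}$, as required by Lemma~\ref{lem:disjointing}) and apply Lemmas~\ref{lem:stoppingtime}--\ref{lem:disjointing} to each $\Omega_k$. This produces balls $\{B_i^k\}$ and pairwise disjoint sets $E_i^k=B_i^k\setminus\Omega_{k+1}$ with $\mu(B_i^k)\le 2\mu(E_i^k)$, and with the quantitative control $a^k<\frac{1}{\mu(B_i^k)}\int_{B_i^k}f\sigma\,d\mu$ coming from property (ii). A standard layer-cake argument together with part (i) of Lemma~\ref{lem:stoppingtime} then yields
\[
\|M(f\sigma)\|_{L^p(w)}^p \lesssim \sum_{k,i} a^{kp}\,w(\theta B_i^k).
\]

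Next I would insert the testing condition. Since $M(\sigma\chi_{B_i^k})\ge \sigma(B_i^k)/\mu(\theta B_i^k)$ pointwise on $\theta B_i^k$, doubling of $\mu$ and the definition of $[w,\sigma]_{S_p}$ give
\[
w(\theta B_i^k)\lesssim [w,\sigma]_{S_p}^p\,\frac{\mu(B_i^k)^p}{\sigma(B_i^k)^{p-1}}.
\]
Combined with the CZ bound on $a^k$, the geometric factors $\mu(B_i^k)^p$ cancel exactly and the estimate collapses to
\[
\|M(f\sigma)\|_{L^p(w)}^p \lesssim [w,\sigma]_{S_p}^p\sum_{k,i}\sigma(B_i^k)\Big(\frac{1}{\sigma(B_i^k)}\int_{B_i^k}f\,d\sigma\Big)^p.
\]

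The main obstacle will be controlling this last sum by $(p')^p\|f\|_{L^p(\sigma)}^p$ with the \emph{sharp} $p'$ constant. A naive approach stalls here because the Calderón--Zygmund construction only yields $\mu$-sparsity ($\mu(B_i^k)\le 2\mu(E_i^k)$), whereas one would want $\sigma$-sparsity ($\sigma(B_i^k)\le C\sigma(E_i^k)$) in order to identify the sum with $\int_{\bigcup_{k,i} E_i^k}(M_\sigma^d f)^p\,d\sigma$ and invoke the $L^{p'}(\sigma)$-boundedness of the $\sigma$-dyadic maximal operator (whose norm is $p'$ by the classical Marcinkiewicz computation). To bridge this gap I would run a second stopping-time on $\sigma$-averages to extract a subfamily of \emph{principal balls} that is $\sigma$-sparse and controls the original $\sigma$-averages pointwise up to a summable geometric series, as in the principal-ball arguments of Pérez. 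Summing the geometric series and invoking the $L^{p'}(\sigma)$-bound of the dyadic maximal function---where the Christ--Hytönen--Kairema dyadic systems in SHT provide a clean setting---yields the sharp $p'$ factor and completes the proof of \eqref{eq:kairema}.
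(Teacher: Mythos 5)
This theorem is quoted from Kairema's paper \cite{kairema:twoweight} and is \emph{not} proved in the present article; it is used as an external input to Theorem~\ref{thm:main}. So there is no in-paper proof to compare against, and your outline has to be assessed on its own.

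The outline has a genuine gap at the testing step. You claim that the pointwise bound $M(\sigma\chi_{B_i^k})\ge\sigma(B_i^k)/\mu(\theta B_i^k)$ on $\theta B_i^k$, together with doubling of $\mu$, yields
\[
w(\theta B_i^k)\lesssim [w,\sigma]_{S_p}^p\,\frac{\mu(B_i^k)^p}{\sigma(B_i^k)^{p-1}}.
\]
That does not follow. To control $w(\theta B_i^k)$ you must test on the ball $\theta B_i^k$, and the testing condition then produces
\[
w(\theta B_i^k)\le[w,\sigma]_{S_p}^p\,\frac{\mu(\theta B_i^k)^p\,\sigma(\theta B_i^k)}{\sigma(B_i^k)^p};
\]
doubling of $\mu$ replaces $\mu(\theta B_i^k)$ by $\mu(B_i^k)$, but it says nothing about $\sigma(\theta B_i^k)$, and for a generic weight $\sigma$ (which is \emph{not} assumed doubling) the ratio $\sigma(\theta B_i^k)/\sigma(B_i^k)$ is uncontrolled. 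Testing on $B_i^k$ instead does give the clean factor $\mu(B_i^k)^p/\sigma(B_i^k)^{p-1}$, but then one only bounds $w(B_i^k)$, not $w(\theta B_i^k)$, which is what is actually needed because the Vitali covering in Lemma~\ref{lem:stoppingtime} gives $\Omega_k\subset\bigcup_i\theta B_i^k$. This is precisely the dilation phenomenon that forces the proof of Theorem~\ref{thm:main} in the paper to carry the $\theta$-dilated balls through the whole argument (e.g.\ the averages $\|\cdot\|_{\bar\Phi,\theta B_i^k}$), and it cannot be absorbed cheaply.

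The right way to remove it --- and the route Kairema actually takes --- is to set up the Calder\'on--Zygmund decomposition on the Hyt\"onen--Kairema dyadic systems from the very beginning, not to invoke them only at the last step as you do. With dyadic CZ cubes the level sets are covered without any dilation, so testing on a cube $Q$ immediately yields $w(Q)\le[w,\sigma]^p_{S_p^{\mathcal D}}\,\mu(Q)^p/\sigma(Q)^{p-1}$ with no extraneous $\theta$ or $\sigma$-dilates; one must of course also relate the dyadic testing constant $[w,\sigma]_{S_p^{\mathcal D}}$ to the ball testing constant $[w,\sigma]_{S_p}$ appearing in \eqref{eq:kairema}, which is part of the content of \cite{kairema:twoweight} and is not automatic. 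Once that is in place, your diagnosis of the $\mu$-sparse versus $\sigma$-sparse mismatch is correct and the remedy you sketch (a principal-cube stopping time on the $\sigma$-averages followed by the universal bound $\|M^{\mathcal D}_\sigma\|_{L^p(\sigma)\to L^p(\sigma)}\le p'$ --- note $L^p(\sigma)$, not $L^{p'}(\sigma)$ as you wrote) does give the sharp $p'$ factor.
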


We now present the proof of the main result.
\begin{proof}[Proof of Theorem \ref{thm:main}]
By Theorem \ref{thm:kairema}, we only need to prove that 
\begin{equation*}
 [w,\sigma]_{S_p}\le C [w,\sigma,\Phi]^{1/p}_{A_p}[\sigma,\bar\Phi]^{1/p}_{W_p}
\end{equation*}
for some constant $C$, for any Young function $\Phi$, for any $1<p<\infty$. Let $B$ be a fixed ball $B$ and consider the sets $\Omega_k$ from \eqref{eq:Omega-k} for the function $\sigma\chi_B$ for any $k\in \mathbb{Z}$. We remark here that in order to apply a C--Z decomposition of these sets, we need the level of the decomposition to be larger that the average over the ball. We proceed as follows. Take any $a>1$ and consider $k_0\in\mathbb{Z}$ such that
\begin{equation}\label{eq:small-average}
a^{k_0-1}< \avgint_B \sigma\ d\mu \le a^{k_0}.  
\end{equation}
Now, let $A$ be the set of the small values of the maximal function: 
\begin{equation*}
 A=\left\{x\in B: M(\sigma\chi_B)\le a\avgint_B \sigma\ d\mu\right\}.
\end{equation*}

For any $x\in B\setminus A$, we have that 
\begin{equation*}
 M(\sigma\chi_B)(x)> a\avgint_B \sigma\ d\mu>a^{k_0}\ge\avgint_B \sigma\ d\mu.
\end{equation*}

Therefore,
\begin{eqnarray*}
\int_B M(\sigma\chi_B)^p w \ d\mu  & =  & \int_A M(\sigma\chi_B)^p w \ d\mu  +\int_{B\setminus A} M(\sigma\chi_B)^p w \ d\mu  \\
& \le & a^p w(B) \left(\avgint_{B} \sigma\ d\mu\right)^p + \sum_{k\ge k_0} \int_{ \Omega_{k}\setminus \Omega_{k+1}} M(\sigma\chi_B)^p w\ d\mu\\
&= & I + II
\end{eqnarray*}

The first term $I$ can be bounded easily. By the general H\"older inequality \eqref{eq:HOLDERlocal}, we obtain
\begin{eqnarray*}
 I & \le & 2a^p\left(\avgint_B w\ d\mu \right)\|\sigma^{1/p'}\|^p_{\Phi,B}\|\sigma^{1/p}\|^p_{\bar\Phi,B}\ \mu(B)\\
&\le & 2[w,\sigma,\Phi]_{A_p}\int_B M_{\bar\Phi}(\sigma^{1/p}\chi_B)^p\ d\mu
\end{eqnarray*}

Now, for the second term $II$, we first note that

\begin{eqnarray*}
\int_{B\setminus A} M(\sigma\chi_B)^p w \ d\mu & = & \sum_{k\ge k_0} \int_{ \Omega_{k}\setminus \Omega_{k+1}} M(\sigma\chi_B)^p w\ d\mu\\
& \le & a^p\sum_{k\ge k_0} a^{kp} w(\Omega_{k})
\end{eqnarray*}

By the choice of $k_0$, we can apply Lemma \ref{lem:stoppingtime} to perform a C--Z decomposition at all levels $k\ge k_0$ and obtain a family of balls $\{B^k_i\}_{i,k}$ with the properties listed in that lemma. Then,

\begin{eqnarray*}
\int_{B\setminus A} M(\sigma\chi_B)^p w \ d\mu &\le & a^{p} \sum_{k,i} \left(\avgint_{B_i^k}\sigma\chi_B\ d\mu \right)^{p} w(\theta B_i^k)\\
&\le & a^{p} \sum_{k,i} \left(\frac{\mu(\theta B_i^k)}{\mu(B_i^k)}\avgint_{\theta B_i^k}\sigma^\frac{1}{p}\sigma^\frac{1}{p'}\chi_B\ d\mu \right)^{p} w(\theta B_i^k)
 \end{eqnarray*}
We now proceed as before, using  the local generalized Holder inequality \eqref{eq:HOLDERlocal} and  the doubling property \eqref{eq:doublingDIL} of the measure (twice). Then we obtain
\begin{equation*}
 \int_{B\setminus A} M(\sigma\chi_B)^p w  d\mu \le 2a^{p} (2\theta)^{(p+1)D_\mu}[w,\sigma,\Phi]_{A_p}\sum_{k,i}\left\| \sigma^\frac{1}{p}\chi_B\right\|^p_{\bar{\Phi},\theta B_i^k}\mu(B_i^k)
\end{equation*}
The key here is to use Lemma \ref{lem:disjointing} to pass from the family $\{B_i^k\}$ to the pairwise disjoint family $\{E_i^k\}$. Then, for $a\ge 2(4\theta\eta)^{D_\mu}$, we can bound the last sum as follows 
\begin{eqnarray*}
\sum_{k,i}\left\| \sigma^\frac{1}{p}\chi_B\right\|^p_{\bar{\Phi},\theta B_i^k}\mu(B_i^k)& \le &  2 \sum_{k,i}\left\| \sigma^\frac{1}{p}\chi_B\right\|^p_{\bar{\Phi},\theta B_i^k}\mu(E_i^k)\\
&\le&  2 \sum_{k,i} \int_{E_i^k}M_{\bar{\Phi}}(\sigma^\frac{1}{p}\chi_B)^p\ d\mu\\
&\le&  2  \int_B M_{\bar{\Phi}}(\sigma^\frac{1}{p}\chi_B)^p\ d\mu
\end{eqnarray*}
since the sets $ \{ E_{k,j}\}$ are pairwise disjoint. Collecting all previous estimates and dividing by $\sigma(B)$, we obtain the desired estimate
\begin{equation*}
 [w,\sigma]^p_{S_p}\le 4 a^{p} (2\theta)^{(p+1)D_\mu}[w,\sigma,\Phi]_{A_p} [\sigma,\bar\Phi]_{W_p},
\end{equation*}
and the proof of Theorem \ref{thm:main} is complete. 
\end{proof}

It remains to prove Corollary \ref{cor:mixed-two-weight}. To that end, we need to consider the special case of $\Phi(t)=t^{p'}$.

\begin{proof}[Proof of Corollary \ref{cor:mixed-two-weight}]
Considering then  $\Phi(t)=t^{p'}$, the quantity \eqref{eq:A_p-local} is 
\begin{eqnarray*}
A_p(w,\sigma,B,\Phi) & = &\left( \avgint_{B} w\, d\mu\right)\|\sigma^{1/p'}\|^p_{\Phi,B} \\
& = & \left( \avgint_{B} w(y)\, d\mu\right) \left( \avgint_{B} \sigma \, d\mu\right)^{p-1}.
\end{eqnarray*}
In addition, we have from \eqref{eq:WpPhi-p--Ainfty} that $[\sigma,\overline{\Phi_{p'}}]_{W_p}=[\sigma,\Phi_p]_{W_p}=[\sigma]_{A_\infty}$  and therefore we obtain \eqref{eq:mixed-two}.
\end{proof}

For the proof of Corollary \ref{cor:precise-bump}, we simply use the boundedness of $M_{\bar\Phi}$  on $L^p(\mu)$, 
\begin{equation*}
 [\sigma,\bar\Phi]_{W_p}:=\sup_B\frac{1}{\sigma(B)}\int_B M_{\bar\Phi}\left(\sigma^{1/p}\chi_B\right)^p\ d\mu 
 \leq \|M_{\bar\Phi}\|^p_{L^p}.
\end{equation*}

The proof of Corollary \ref{cor:mixed-one-weight} is trivial.

\section{Appendix}\label{sec:appendix}

We include here a direct proof of version of Corollary \ref{cor:precise-bump} which is better in terms of the dependence on $p$. Precisely, we have the following Proposition.

\begin{proposition}\label{pro:precise-bump-sharp-p}
 Let  $1 < p < \infty$. For any pair of weights $w,\sigma$ and any Young function $\Phi$,
there exists a structural constant $C>0$ such that
\begin{equation*}
\|M (f\sigma)\|_{L^p(w)}\leq C [w,\sigma,\Phi]^{1/p}_{A_p} \|M_{\bar\Phi}\|_{L^p}\|f\|_{L^{p}(\sigma)}    
\end{equation*}
\end{proposition}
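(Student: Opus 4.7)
The plan is to run the same Calderón--Zygmund argument that drives the proof of Theorem~\ref{thm:main}, but applied directly to $\int_{\mathcal{S}} M(f\sigma)^p\, w\, d\mu$ rather than to Sawyer's testing quantity $[w,\sigma]_{S_p}$; this is what removes the extra factor $p'$ that Theorem~\ref{thm:kairema} contributes. After a standard truncation of $f$ we may assume $f\sigma$ is bounded with compact support. Fix $a > 1$ large (to be chosen in line with Lemma~\ref{lem:disjointing}), set $\Omega_k = \{x \in \mathcal{S} : M(f\sigma)(x) > a^k\}$, and use the level set decomposition
\begin{equation*}
\int_{\mathcal{S}} M(f\sigma)^p w\, d\mu \leq a^p \sum_{k} a^{kp} w(\Omega_k).
\end{equation*}
For each $k$, Lemma~\ref{lem:stoppingtime} (applied with $B = \mathcal{S}$ in the bounded case, and via its covering construction applied directly to $\Omega_k$ in the unbounded case) yields a pairwise disjoint family $\{B_i^k\}$ satisfying $\Omega_k \subset \bigcup_i \theta B_i^k$ and $\avgint_{B_i^k} f\sigma\, d\mu > a^k$.

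The main chain of inequalities is then essentially the one already performed in the proof of Theorem~\ref{thm:main}. Replace $a^k$ by the stopping-time average, apply the local Orlicz Hölder inequality \eqref{eq:HOLDERlocal} to the factorization $f\sigma = (f\sigma^{1/p})\cdot \sigma^{1/p'}$, and trade $w(\theta B_i^k)$ for a structural multiple of $\mu(B_i^k)\,\avgint_{B_i^k}w\,d\mu$ via the doubling condition \eqref{eq:doublingDIL}. The $A_{p,\Phi}$ hypothesis then absorbs the factor $\|\sigma^{1/p'}\|_{\Phi, B_i^k}^p\,\avgint_{B_i^k}w\,d\mu$ into at most $[w,\sigma,\Phi]_{A_p}$, reducing matters to the estimate of
\begin{equation*}
\sum_{k,i} \|f\sigma^{1/p}\|_{\bar\Phi, B_i^k}^p\,\mu(B_i^k).
\end{equation*}
By Lemma~\ref{lem:disjointing} with $a$ chosen large enough, $\mu(B_i^k) \leq 2\mu(E_i^k)$, and since $\|f\sigma^{1/p}\|_{\bar\Phi, B_i^k} \leq M_{\bar\Phi}(f\sigma^{1/p})(x)$ for every $x \in E_i^k$, pairwise disjointness of $\{E_i^k\}$ bounds the sum by $2\int_{\mathcal{S}} M_{\bar\Phi}(f\sigma^{1/p})^p\,d\mu$.

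Finally the assumed $L^p$-boundedness of $M_{\bar\Phi}$ delivers
\begin{equation*}
\int_{\mathcal{S}} M_{\bar\Phi}(f\sigma^{1/p})^p\,d\mu \leq \|M_{\bar\Phi}\|_{L^p}^p\,\|f\|_{L^p(\sigma)}^p,
\end{equation*}
and taking $p$-th roots yields the claimed bound with no $p'$ factor. The only piece of genuine extra bookkeeping compared with the proof of Theorem~\ref{thm:main} is the globalization step: Lemma~\ref{lem:stoppingtime} is phrased inside a fixed ball, so one must either reduce to $B = \mathcal{S}$ via Lemma~\ref{lem:bounded-finite} in the bounded case, or repeat the Vitali covering argument of that lemma's proof on the global level set $\Omega_k$ in the unbounded case. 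I expect this globalization, rather than anything in the main estimate, to be the one spot that requires real care; once it is in place the rest is a clean rerun of the estimates from Theorem~\ref{thm:main}, with $\sigma^{1/p}\chi_B$ replaced throughout by $f\sigma^{1/p}$.
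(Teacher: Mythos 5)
Your proposal is essentially the paper's own argument: rerun the Calder\'on--Zygmund stopping-time decomposition from the proof of Theorem~\ref{thm:main} directly on $\int M(f\sigma)^p w\,d\mu$, factor $f\sigma = (f\sigma^{1/p})\sigma^{1/p'}$, apply the local Orlicz H\"older inequality and the $A_{p,\Phi}$ hypothesis on the dilated balls $\theta B_i^k$, invoke Lemma~\ref{lem:disjointing} to pass to the disjoint sets $E_i^k$, and finish with the $L^p$-boundedness of $M_{\bar\Phi}$ --- thereby bypassing the $p'$ from Theorem~\ref{thm:kairema}. The globalization concern you flag is exactly what the paper addresses by splitting into the unbounded case (where $\avgint_{\mathcal{S}} f\sigma\,d\mu = 0$ so the stopping-time hypothesis $\lambda \ge \avgint f$ is automatic for every $k$) and the bounded case (where $\mathcal{S}$ is a ball and one truncates at a level $k_0$ exactly as in Theorem~\ref{thm:main}).
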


\begin{proof}[Proof of Proposition  \ref{pro:precise-bump-sharp-p}]
By density it is enough to prove the inequality for each nonnegative bounded function with compact support  $f$. We first consider the case of unbounded $S$. In this case we have $\avgint_S f\sigma\ d\mu=0$. Therefore, instead of the sets from sets from \eqref{eq:Omega-k}, we consider 
\begin{equation*}\label{eq:Omega-k-global}
\Omega _{k} = \left\{x\in \mathcal{S}: M(f\sigma)(x) >a^{k}  \right\}, 
\end{equation*}
for any $a>1$ and any $k\in \mathbb{Z}$. Then, we can write

\begin{equation*}
\int_{\mathcal S} M(f\sigma)^p w \ d\mu = \sum_{k} \int_{ \Omega_{k}\setminus \Omega_{k+1}} M(f\sigma)^p w\ d\mu 
\end{equation*} 
Then, following the same line of ideas as in the proof of Theorem \ref{thm:main}, we obtain
\begin{equation*}
 \int_{\mathcal S} M(f\sigma)^p w  d\mu   \le 2a^{p} (2\theta)^{(p+1)D_\mu}[w,\sigma,\Phi]_{A_p} \sum_{k,i}\left\| f\sigma^\frac{1}{p}\right\|^p_{\bar{\Phi},\theta B_i^k}\mu(B_i^k)
\end{equation*}
By Lemma \ref{lem:disjointing} we can replace the family $\{B_i^k\}$ by the pairwise disjoint family $\{E_i^k\}$ to obtain the desired estimate:
\begin{equation*}
 \int_{\mathcal S} M(f\sigma)^p w \ d\mu \le 4a^{p} (2\theta)^{(p+1)D_\mu}[w,\sigma,\Phi]_{A_p}  \|M_{\bar{\Phi}}\|_{L^p}^p\int_{S}f^p\sigma\ d\mu.
\end{equation*}
In the bounded case, the whole space is a ball and we can write $\mathcal S=B(x,R)$ for any $x$ and some $R>0$. The problem here is to deal with the small values of $\lambda$, since we cannot apply Lemma \ref{lem:disjointing} for $a^k\le \avgint_S f\sigma\ d\mu$.  We then take any $a>1$ and consider $k_0\in\mathbb{Z}$ to verify \eqref{eq:small-average}:
\begin{equation*}
a^{k_0-1}< \avgint_S f\sigma\ d\mu \le a^{k_0}
\end{equation*}
and argue as in the proof of Theorem \ref{thm:main}.
\end{proof}

Now, from this last proposition, we can derive another proof of the mixed bound \eqref{eq:mixed-two} from Corollary \ref{cor:mixed-two-weight}. The disadvantage of this approach with respect to the previous one is that we need a deep property of $A_\infty$ weights: the sharp Reverse H\"older Inequality. In the whole generality of SHT, we only know a \emph{weak} version of this result from the recent paper \cite{HPR1}:
\begin{theorem}[Sharp  weak Reverse H\"older Inequality, \cite{HPR1}]\label{thm:SharpRHI}
Let $w\in A_\infty$. Define the exponent $r(w)=1+\frac{1}{\tau_{\kappa\mu}[w]_{A_{\infty}}}$, 
where $\tau_{\kappa\mu}$ is an structural constant.
Then,
\begin{equation*}
  \left(\avgint_B w^{r(w)}\ d\mu\right)^{1/r(w)}\leq 2(4\kappa)^{D_\mu}\avgint_{2\kappa B} w\ d\mu,
\end{equation*}
where $B$ is any ball in $\mathcal S$.
\end{theorem}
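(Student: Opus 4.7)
Proof plan. The plan is to combine the Fujii--Wilson inequality with a Calder\'on--Zygmund stopping-time argument at geometrically spaced levels, exploiting the smallness of the exponent $\delta := r(w)-1$ to sum the resulting geometric series. The dilation factor $2\kappa$ on the right-hand side is forced by Lebesgue differentiation: in a SHT without non-empty annuli, for $x\in B$ only small balls around $x$ contained in $2\kappa B$ (and not necessarily in $B$) are available via the quasi-triangle inequality, so one can only conclude that $w(x)\leq M(w\chi_{2\kappa B})(x)$ for a.e.\ $x\in B$.

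With this pointwise bound, the layer-cake identity gives
\[
\int_B w^{1+\delta}\,d\mu \;\leq\; \delta \int_0^\infty t^{\delta-1}\, w\bigl(\{x\in B : M(w\chi_{2\kappa B})(x) > t\}\bigr)\,dt.
\]
Setting $\lambda_0 := \avgint_{2\kappa B} w\,d\mu$, I would split at $t=\lambda_0$. For $t\leq \lambda_0$ the trivial bound $w(\{\cdot\}\cap B)\leq w(2\kappa B)=\lambda_0\,\mu(2\kappa B)\leq (4\kappa)^{D_\mu}\lambda_0\,\mu(B)$ yields a contribution of order $\lambda_0^{1+\delta}\mu(B)$, which after dividing by $\mu(B)$ and taking $(1+\delta)$-th roots matches the right-hand side of the desired inequality with the constant $2(4\kappa)^{D_\mu}$. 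For $t>\lambda_0$, Lemma~\ref{lem:stoppingtime} applied to $w\chi_{2\kappa B}$ produces a disjoint family $\{B_i^t\}$ with $\Omega_t\subset\bigcup_i \theta B_i^t$ and $\avgint_{B_i^t} w\chi_{2\kappa B}\,d\mu\approx t$, whence by doubling $w(\theta B_i^t)\lesssim t\,\mu(B_i^t)$.

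The crucial step is to convert the Fujii--Wilson bound $\int_{B'}M(w\chi_{B'})\,d\mu\leq [w]_{A_\infty}w(B')$ into geometric decay for $w(\Omega_{a^k\lambda_0}\cap B)$ as $k\to\infty$, with $a>1$ a large structural parameter. This is done by using Lemma~\ref{lem:disjointing} to pass from the overlapping family $\{B_i^k\}$ at successive levels to the pairwise disjoint sets $\{E_i^k\}$, and then bounding the resulting sum $\sum_{k,i}(a^k\lambda_0)^\delta w(E_i^k)$ against the Fujii--Wilson integral $\int_{2\kappa B}M(w\chi_{2\kappa B})\,d\mu\leq [w]_{A_\infty}w(2\kappa B)$ times a geometric factor $\sum_k (a^\delta\beta)^k$ with $\beta<1$ depending only on $\kappa$ and $C_\mu$. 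Convergence of this series forces $\delta\lesssim 1/[w]_{A_\infty}$, which is precisely $r(w)-1 = 1/(\tau_{\kappa\mu}[w]_{A_\infty})$. The main obstacle is the quantitative bookkeeping at this geometric-decay step: one must verify that $\tau_{\kappa\mu}$ absorbs only structural quantities (namely $\kappa$ and $C_\mu$) and that the localized Fujii--Wilson-type bounds applied at each scale $\theta B_i^k$ do not introduce hidden dependence on $[w]_{A_\infty}$ itself or on the ball $B$, so that the exponent $r(w)$ depends \emph{sharply} as $1+O(1/[w]_{A_\infty})$ rather than as $1+O(1/[w]_{A_\infty}^2)$.
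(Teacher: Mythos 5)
This theorem is not proved in the paper at all: it is cited verbatim from \cite{HPR1}, and indeed the raison d'\^etre of the present article (see the abstract and the paragraph preceding the statement) is to give a proof of the mixed $A_p$--$A_\infty$ bounds that \emph{avoids} invoking the sharp weak reverse H\"older inequality. The theorem appears only in the appendix, as a black box used to give an \emph{alternative} proof of Corollary~\ref{cor:mixed-two-weight}. So there is no ``paper's own proof'' to compare your proposal against; what follows is a review of your sketch on its own terms, with reference to the argument in \cite{HPR1}.

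Your overall framework --- pass from $w$ to $M(w\chi_{2\kappa B})$ via Lebesgue differentiation and the engulfing property, apply the layer-cake formula, split at $\lambda_0=\avgint_{2\kappa B} w\,d\mu$, and use Lemmas~\ref{lem:stoppingtime} and~\ref{lem:disjointing} to decompose the superlevel sets and pass to the disjoint family $\{E_i^k\}$ --- is the right framework and matches the strategy of \cite{HPR1}. The gap is at what you yourself flag as the crucial step: converting the Fujii--Wilson bound into summability. You claim the sum $\sum_{k,i}(a^k\lambda_0)^\delta w(E_i^k)$ is controlled by $[w]_{A_\infty} w(2\kappa B)$ times $\sum_k (a^\delta\beta)^k$ with $\beta<1$ \emph{structural} (depending only on $\kappa$ and $C_\mu$). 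This cannot be correct as stated. If $\beta$ were structural, convergence of $\sum_k (a^\delta\beta)^k$ would impose only a structural smallness condition on $\delta$, independent of the weight; you would recover an ordinary reverse H\"older inequality with a universal exponent, not the sharp exponent $r(w)-1\sim 1/[w]_{A_\infty}$. For the $[w]_{A_\infty}$ dependence to emerge the way the theorem asserts, it must enter through the constraint on $\delta$, not be sitting harmlessly as a prefactor. Moreover, the Fujii--Wilson bound by itself yields only $\sum_{k,i} a^k\mu(E_i^k)\lesssim [w]_{A_\infty} w(2\kappa B)$ (since $M(w\chi_{2\kappa B})\ge a^k$ on $E_i^k\subset\Omega_k$ and the $E_i^k$ are disjoint); from this one cannot deduce any a priori geometric decay of the sequence $\bigl(\sum_i a^k\mu(E_i^k)\bigr)_k$ in $k$, and the extra factor $a^{k\delta}$ is unbounded as $k\to\infty$.

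What actually closes the argument in \cite{HP}/\cite{HPR1} is an \emph{absorption} (self-referential) inequality, not geometric decay. One introduces the truncated quantity
\begin{equation*}
X_N := \frac{1}{\mu(B)}\int_B \bigl(\min(M(w\chi_{2\kappa B}),a^N)\bigr)^{\delta} w\,d\mu,
\end{equation*}
which is finite for each $N$, and shows an inequality of the schematic form
\begin{equation*}
X_N \le C_1\,\lambda_0^{1+\delta} + C_2\,\delta\,[w]_{A_\infty}\,X_N,
\end{equation*}
with $C_1,C_2$ structural. The linear-in-$[w]_{A_\infty}$ coefficient in front of $X_N$ is produced by combining the local stopping inequality $a^k\mu(B_i^k)< w(B_i^k)$ with the Fujii--Wilson bound applied at every scale, and it is precisely here that the $\delta$ cannot be taken structural: absorption requires $C_2\,\delta\,[w]_{A_\infty}<1$, i.e.\ $\delta\lesssim 1/[w]_{A_\infty}$, which gives $r(w)=1+\delta$ the exact form claimed. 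Letting $N\to\infty$ then concludes. Your plan omits the truncation, the self-referential structure, and the absorption step, and replaces them with a geometric-series heuristic that does not yield the sharp exponent. If you wish to pursue this statement further, the fix is to reorganize the tail estimate so that the quantity you are trying to bound reappears on the right-hand side multiplied by $c\,\delta\,[w]_{A_\infty}$, rather than trying to extract pointwise decay of $w(E_i^k)$ in $k$.
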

The other ingredient for the alternative proof of Corollary \ref{cor:mixed-two-weight} is  the known estimate for the operator norm for $M$. For any $1<q<\infty$, we have that $\|M\|^q_{L^q}\sim q'$. 

\begin{proof}[Another proof of Corollary \ref{cor:mixed-two-weight}]
Consider the particular choice of $\Phi(t)=t^{p'r}$ for $r>1$. Then  quantity \eqref{eq:A_p-local} is 

\begin{equation*}
A_p(w,\sigma,B,\Phi) =\left( \avgint_{B} w(y)\, d\mu\right) \left( \avgint_{B} \sigma^r\, d\mu\right)^{p/rp'}
\end{equation*}
If we choose $r$ from the sharp weak reverse H\"older property (Theorem \ref{thm:SharpRHI}), we obtain that
\begin{eqnarray*}
A_p(w,\sigma,B,\Phi) & = & 
\left( \avgint_{B} w\ d\mu\right)\left(2(4\kappa)^{D_\mu}\avgint_{2\kappa B} \sigma\ d\mu\right)^{p-1}\\
&\le& 2^{p-1}(4\kappa)^{pD_\mu}\left( \avgint_{2\kappa B} w\ d\mu\right)\left(\avgint_{2\kappa B} \sigma\ d\mu\right)^{p-1}\\
&\le & 2^{p-1}(4\kappa)^{pD_\mu}[w,\sigma]_{A_p} 
\end{eqnarray*}
And therefore the proof of Proposition \ref{pro:precise-bump-sharp-p} gives 
\begin{equation*}
\|M (f\sigma)\|_{L^p(w)} \leq C [w,\sigma]_{A_p}^{1/p}  \|M_{\bar{\Phi}}\|_{L^{p}(\mathcal{S},d\mu)}  \, \|f\|_{L^{p}(\sigma)}.
\end{equation*}
We conclude with the proof by computing $\|M_{\bar \Phi}\|_{L^p}$ for $\Phi(t)=t^{p'r}$. We use \eqref{eq:Phi-p}, and then we obtain that $\|M_{\bar \Phi}\|^p_{L^p}\le c r'p'$.
But, by the choice of $r$, it follows that $r'\sim [\sigma]_{A_\infty}$ and we obtain \eqref{eq:mixed-two}.
\end{proof}

\bibliographystyle{alpha}


\end{document}